\newdefinition{defi}{Definition}
\newtheorem{lem}{Lemma}
\newtheorem{thm}{Theorem}
\newdefinition{rmk}{Remark}
\begin{document}

\begin{frontmatter}

\title{2N-storage Runge-Kutta methods:\\
Order conditions, general properties and some analytic solutions}

\author{Alexei Bazavov}
\ead{bazavov@msu.edu}
\address{Department of Computational Mathematics, Science and Engineering and\\
Department of Physics and Astronomy,\\
Michigan State University, East Lansing, MI 48824, USA}

\begin{abstract}
Low-storage Runge-Kutta schemes of Williamson's type, so-called 2N-storage schemes, are examined. Explicit 2N-storage constraints are derived for the first time and used to establish new relations between the entries of the Butcher tableau. An error in the Williamson's formula for converting coefficients between the standard and 2N-storage formats in the special case is pointed out and corrected. The new relations are used to derive a closed-form solution for four- and five-stage 2N-storage methods with the third order of global accuracy. Several new four- and five-stage schemes with rational coefficients are presented and numerically examined for illustration.
\end{abstract}

\begin{keyword}
Numerical analysis \sep Runge-Kutta methods \sep Low-storage Runge-Kutta methods (LSRK)
\end{keyword}

\end{frontmatter}


\section{Introduction}
\label{sec_intro}
 Runge-Kutta methods are well-known and widely used methods for numerically solving initial value problems. A variety of literature is dedicated to them, and one can find a standard textbook treatment in Refs.~\cite{ButcherBook,HairerBook1}. It has been found that explicit Runge-Kutta methods may allow for low-storage implementation when the number of memory locations for storing the intermediate stages can be reduced. Imposing additional constraints leads to so-called low-storage Runge-Kutta methods that possess structures different from the standard ones. The main focus of this work is not the memory considerations, but those structural properties of the low-storage methods, more narrowly, 2N-storage methods of Williamson's type, defined in Sec.~\ref{sec_over_2n}. Since their introduction by Williamson in 1980~\cite{WILLIAMSON198048}, 2N-storage Runge-Kutta methods were developed further by a number of authors~\cite{CK1994,STANESCU1998674,BERLAND20061459,ALLAMPALLI20093837,BERNARDINI20094182,KETCHESON20101763,TOULORGE20122067,NIEGEMANN2012364,Yan2017}. An $s$-stage Runge-Kutta method of the global order of accuracy $p$ is denoted $(s,p)$ in this paper.
 
 Numerical evidence, recently examined in Ref.~\cite{Bazavov2021} strongly suggests that the 2N-storage Runge-Kutta methods of Williamson's type possess quite a remarkable property: they are automatically Lie group, or structure preserving integrators. No extra order conditions or commutators are needed to turn a classical 2N-storage method into a Lie group integrator. This result was proved for a (3,3) 2N-storage method and remains a conjecture for methods with more stages and of higher order. Most of the 2N-storage methods present in the literature to date were examined in Ref.~\cite{Bazavov2021} and the results support the conjecture.
 That structure-preserving feature of the 2N-storage methods motivated deeper investigation of their properties, which resulted in the present work. The paper is organized as follows. General definitions and established results are given in Sec.~\ref{sec_over_2n}. In Sec.~\ref{sec_equivalent} an intermediate, so-called, $\alpha$-form of the 2N-storage method is introduced. In Sec.~\ref{sec_order} the additional constraints on the Runge-Kutta coefficients that make the scheme a 2N-storage method are derived in an explicit form for the first time. Together with the standard order conditions these constraints form a set of nonlinear equations, called 2N-storage order conditions, that can be solved in the original Runge-Kutta parameters, rather than in the low-storage parameters as has been done in previous work. In Sec.~\ref{sec_relations} relations between the standard and 2N-storage forms are examined from the perspective of the explicit 2N-storage order conditions derived in Sec.~\ref{sec_order}, and an error in the special case of Williamson's formulas~\cite{WILLIAMSON198048} is pointed out and corrected. A Matlab script to help the reader quickly see the issue is included in \ref{sec_app_matlab}. In Sec.~\ref{sec_abc} the 2N-storage constraints are explicitly solved, in the sense that all $a_{ij}$ parameters of a 2N-storage Runge-Kutta method, independently of its order of accuracy, can be expressed through the nodes $c_i$ and weights $b_i$. In Secs.~\ref{sec_lsrk43} and \ref{sec_lsrk53} the new results are applied to derive a general solution for the (4,3) and (5,3) methods in the following sense. A single second-order equation for the $b_2$ parameter is presented whose coefficients are functions of the free parameters. Once the latter are chosen and the equation is solved for $b_2$, the whole Butcher tableau is reconstructed from simple relations, which are also presented. Special cases for the (4,3) method are presented for illustrative purposes in \ref{sec_app_spec43}. As the equation for $b_2$ is of such a low order, schemes with rational coefficients are found easily (over two million were found relatively quickly with a simple grid search). Several (4,3) and (5,3) schemes with rational coefficients are presented and examined numerically in Sec.~\ref{sec_num} for illustration. The conclusions are given in Sec.~\ref{sec_concl}.

\section{Overview of the standard and 2N-storage Runge-Kutta methods}
\label{sec_over_2n}
The starting point is an initial value problem consisting of a first order differential equation for a function $y(t)$:
\begin{equation}
\label{eq_dydt}
\frac{dy}{dt}=f(t,y)
\end{equation}
and the specified initial condition $y(t_0)$.
An $s$-stage explicit Runge-Kutta method which is referred to as \textit{standard} or \textit{classical} in the following, for numerically integrating Eq.~(\ref{eq_dydt}) from time $t$ to $t+h$ can be written in the form~\cite{ButcherBook,HairerBook1}:
\begin{eqnarray}
y_i&=&y_t + h\sum_{j=1}^{s}a_{ij}k_j,\label{eq_yi}\\
k_i&=&f(t+hc_i,y_i),\\
i&=&1,\dots,s,\\
y_{t+h}&=&y_t+h\sum_{i=1}^{s}b_ik_i\label{eq_yth},
\end{eqnarray}
where $y_t$ is a numerical approximation of $y(t)$ at the beginning of the step and
$y_{t+h}$ is the one at the end. The step size is denoted $h$.
Only explicit methods are considered here, thus $a_{ij}=0$ for $i\leqslant j$. The coefficients $a_{ij}$, $b_i$ and $c_i$ are often arranged in the form of a Butcher tableau~\cite{ButcherBook}, as shown in~\ref{sec_app_btab}. The notation for a sum (product) implies that when the upper bound on the index is smaller than the lower bound, the sum (product) is set to 0 (1), \textit{e.g.}, $\sum_{j=1}^0\dots=0$ and $\prod_{j=1}^0\dots=1$. The self-consistency conditions relate $a_{ij}$ and $c_i$ coefficients as
\begin{equation}
\label{eq_c_from_a}
c_i=\sum_{j=1}^{i-1}a_{ij},
\end{equation}
and as a result for an explicit method $c_1=0$. For the later convenience, $y_0\equiv y_t$ and $y_{s+1}\equiv y_{t+h}$ are also introduced.

\begin{defi}
In the context of this paper the standard form of a Runge-Kutta method introduced in Eqs.~(\ref{eq_yi})--(\ref{eq_yth}) is called the $a$-form.
\end{defi}

A standard method requires storing right-hand-side evaluation $k_i$ from all stages to be applied at the final stage, Eq.~(\ref{eq_yth}).

Williamson~\cite{WILLIAMSON198048} introduced a class of Runge-Kutta methods that require only two storage locations per step. For a system of $N$ degrees of freedom $2N$ storage locations are required and such methods are commonly referred to as 2N-storage methods, while a wider class of methods that use 2, 3 or more storage locations is referred to as \textit{low-storage methods}. Here only 2N-storage methods of Williamson's type are discussed. They can be cast into the following form:
\begin{eqnarray}
\Delta y_i &=& A_{i}\Delta y_{i-1}+hf(t+c_{i}h,y_{i-1}),\label{eq_2N_W_dyi}\\
y_i &=& y_{i-1} + B_{i}\Delta y_i,\\
i &=&1,\dots,s.\label{eq_2N_W_i}
\end{eqnarray}
\begin{defi}
The representation of the method in Eqs.~(\ref{eq_2N_W_dyi})--(\ref{eq_2N_W_i}) is called the $A$-form.
\end{defi}

It is understood that $\Delta y_i$, $y_i$ overwrite $\Delta y_{i-1}$, $y_{i-1}$ at each stage of the method. While the coefficients of a standard Runge-Kutta method satisfy the corresponding order conditions, the low-storage coefficients $A_i$, $B_i$ are subject to additional constraints and not every standard Runge-Kutta method can be cast into 2N-storage form. The coefficients $a_{ij}$, $b_i$, $c_i$ of the $a$-form of the method are related to the coefficients $A_i$, $B_i$ of the $A$-form as:
\begin{eqnarray}
B_i &=& \left\{
\begin{array}{ll}
a_{i+1,i}, & i=1,\dots,s-1,\\
b_s, & i=s,
\end{array}
\right.\label{eq_W_ab_to_B}\\
A_i &=&
\begin{array}{ll}
\displaystyle
\frac{b_{i-1}-B_{i-1}}{b_{i}}, & i\neq1,\,\,b_i\neq0,\label{eq_W_ab_to_A}\\
\end{array}\\
A_i &=&
\begin{array}{ll}
\displaystyle
\frac{a_{i+1,i-1}-c_{i}}{B_{i}}, & i\neq1,\,\,b_i=0.\label{eq_W_ab0_to_A}\\
\end{array}
\end{eqnarray}
Equations~(\ref{eq_W_ab_to_B})--(\ref{eq_W_ab0_to_A}) first appeared in Ref.~\cite{WILLIAMSON198048}. However, the special case, Eq.~(\ref{eq_W_ab0_to_A}) is only valid for $b_2=0$ and is incorrect for $b_i=0$, $i>2$ as will be shown in Sec.~\ref{sec_Wcorr_b0}. The correct form is presented in Sec.~\ref{sec_Wcorr_b0}.
The inverse transformation is
\begin{eqnarray}
a_{ij} &=& \left\{
\begin{array}{ll}
A_{j+1}a_{i,j+1}+B_j, & j<i-1,\\
B_j, & j=i-1,\\
0, & \mbox{otherwise},
\end{array}
\right.\label{eq_Ai}\\
b_i &=&\left\{
\begin{array}{ll}
A_{i+1}b_{i+1}+B_i, & i<s,\\
B_i, & i=s.
\end{array}
\right.\label{eq_Bi}
\end{eqnarray}

A typical construction of a 2N-storage method proceeds in the following way: the $a$-form coefficients are expressed through the $A$-form coefficients, Eqs.~(\ref{eq_Ai}), (\ref{eq_Bi}). The former are then substituted into the standard order conditions, shown up to and including the order 3 in \ref{sec_app_oc}, which are solved for $A_i$, $B_i$. The additional constraints that guarantee that the method can be written in 2N-storage form are always implicit -- they are encoded in Eqs.~(\ref{eq_Ai}), (\ref{eq_Bi}). In Sec.~\ref{sec_order} these constraints are derived explicitly in the $a$-form, \textit{i.e.}, additional relations between the coefficients $a_{ij}$, $b_i$ that they need to satisfy for a 2N-storage method are presented. Together with the standard order conditions they form a larger set of equations for $a_{ij}$, $b_i$ that is called the \textit{order conditions for 2N-storage methods} here. As is shown in Sec.~\ref{sec_order}, the additional 2N-storage constraints are always quadratic in $a_{ij}$, $b_i$ independently of the order of the method. Thus, this new explicit form of the order conditions for 2N-storage methods may be easier to solve. Moreover, apart from special cases (\textit{e.g.}, $c_2=c_3$, $c_3=c_4$, etc.), in the general case, the additional constraints can be explicitly solved and the coefficients $a_{ij}$ of a 2N-storage method are completely determined by $b_i$ and $c_i$. This allows one to express $A_i$, $B_i$ in terms of $b_i$ and $c_i$ as well.

\section{Equivalent forms of 2N-storage methods}
\label{sec_equivalent}
Before deriving the additional 2N-storage order conditions the following two auxilliary steps are needed: a) resolving the stage indexing inconsistency between the standard, Eqs.~(\ref{eq_yi})--(\ref{eq_yth}), and 2N-storage, Eqs.~(\ref{eq_2N_W_dyi})--(\ref{eq_2N_W_i}), representations, and b) introducing an intermediate form of the method that naturally connects the $a$-form to the $A$-form.

Consider the $a$-form, Eqs.~(\ref{eq_yi})--(\ref{eq_yth}). The reader is reminded that only explicit methods, $a_{1,i}=0$ for any $i$, $c_1=0$ are considered here. Thus, at $i=1$ simply $y_1=y_0$. Then $k_1=f(t+c_1h,y_1)=f(t,y_0)$ and $y_2=y_0+ha_{21}k_1$. The first non-trivial stage for an explicit method in the $a$-form is $i=2$.

In the $A$-form of a 2N-storage method, however, $\Delta y_1=A_1\Delta y_0+hf(t,y_0)$ (since $\Delta y_0$ is not available, for a self-starting explicit method always $A_1=0$) and $y_1=y_0+B_1\Delta y_1=y_0 + hB_1f(t,y_0)=y_0+hB_1k_1$. Thus, $y_1$ of the $A$-form (as it is usually defined in the literature) is equivalent to $y_2$ in the $a$-form. To derive the order conditions, the stages need to be equated. As the $a$-form of a Runge-Kutta method, Eqs.~(\ref{eq_yi})--(\ref{eq_yth}) is now standard in the majority of the literature, it is taken as a baseline and the indexing is shifted by 1 in the $A$-form. Before that, the following slight modification of the $a$-form is introduced that will become convenient later. While in the $a$-form the last stage with the $b_i$ coefficients is distinguished from the previous ones, in the 2N-storage $A$-form all stages are equivalent, therefore let
\begin{equation}
a_{s+1,i}\equiv b_i
\end{equation}
and
\begin{equation}
c_{s+1}\equiv 1.
\end{equation}
Then a modified $a$-form where all stages are treated the same is
\begin{eqnarray}
y_i &=& y_0 + h\sum_{j=1}^{s} a_{ij}k_j,\\
k_i &=& f(t+c_ih, y_i),\\
i &=& 1,\dots,s+1.
\end{eqnarray}
Then $y_{s+1}$ is the sought $y_{t+h}$ and $k_{s+1}$ is not used since it is simply $k_1$ on the \textit{next} step of the integration.

Adding for convenience $A_0=B_0=c_0=0$ to the coefficient set, the shifted 2N-storage $A$-form can be represented as
\begin{eqnarray}
\Delta y_i &=& A_{i-1}\Delta y_{i-1}+hf(t+c_{i-1}h,y_{i-1}),\label{eq_2N_dyi}\\
y_i &=& y_{i-1} + B_{i-1}\Delta y_i,\\
i &=&1,\dots,s+1.\label{eq_2N_i}
\end{eqnarray}
Then $y_1=y_0$ ($\Delta y_1$ does not matter since $B_0=0$), $\Delta y_2=A_1\Delta y_1+hf(t+c_1h,y_0)=hk_1$, $y_2=y_1+B_1\Delta y_2=y_0+hB_1k_1$.

As $B_1=a_{21}$ (from Eq.~(\ref{eq_W_ab_to_B})), there is now a proper match in the indexing of the $a$-form and $A$-form. This is merely a shift in indexing that does not affect the relations between the coefficients, Eqs.~(\ref{eq_W_ab_to_B})--(\ref{eq_Bi}).

Next, there is still an important difference between the $a$-form and $A$-form. In the former the approximation $y_i$ at stage $i$ is evaluated from $y_0$ (the beginning of the step), while in the latter $y_i$ is evaluated from $y_{i-1}$ (previous stage).

An equivalent form of a standard Runge-Kutta method with \textit{propagation from the previous stage} is also possible:
\begin{eqnarray}
y_i &=& y_{i-1} + h\sum_{j=1}^{i-1}\alpha_{ij}k_j,\label{eq_alpha_yi}\\
k_i &=& f(t+c_ih,y_i),\label{eq_alpha_ki}\\
i&=&1,\dots,s+1,\label{eq_alpha_i}
\end{eqnarray}
where again the method is explicit, $\alpha_{ij}=0$ for $i\leqslant j$ and $\alpha_{s+1,i}\equiv \beta_i$.

\begin{defi}
Eqs.~(\ref{eq_alpha_yi})--(\ref{eq_alpha_i}) are referred to as the $\alpha$-form of the method.
\end{defi}
\begin{rmk}
Eq.~(\ref{eq_alpha_yi}) is similar to the Shu-Osher form~\cite{SHU1988439}. The meaning of $\alpha_{ij}$ is different (they are typically denoted $\beta_{ij}$ in that form). While a 2N-storage method can be written in the Shu-Osher representation, it is not used here, therefore the notation should not cause any confusion.
\end{rmk}

After recursive substitution of $y_{i-1}$ into $y_i$ one finds:
\begin{equation}
\label{eq_a_from_alpha}
a_{ij} = \sum_{k=j+1}^{i}\alpha_{kj}
\end{equation}
and for the last stage $i=s+1$
\begin{equation}
\label{eq_b_from_beta}
b_i=\beta_i+\sum_{k=i+1}^s\alpha_{ki}.
\end{equation}
The inverse transformation is
\begin{eqnarray}
\alpha_{ij}&=&a_{ij}-a_{i-1,j},\label{eq_alpha_a}\\
\beta_i&=&b_i - a_{s,i}.\label{eq_beta_b}
\end{eqnarray}
Equation~(\ref{eq_alpha_a}) implies that for the elements on the diagonal of the Butcher tableau $a_{i,i-1}=\alpha_{i,i-1}$ and Eq.~(\ref{eq_beta_b}) that $\beta_s=b_s$. The mapping between the $a$-form and $\alpha$-form is bijective. There is the same number of $\alpha_{ij}$ as $a_{ij}$ and $\beta_i$ as $b_i$. While any explicit Runge-Kutta method can be represented in the $a$-form or equivalently in the $\alpha$-form, only the 2N-storage subset of methods can be consistently represented in the $A$-form.

As the $\alpha$-form corresponds more closely to the $A$-form in that it updates from the previous stage rather than from the beginning of the step, the relations between the $\alpha_{ij}$, $\beta_i$ and $A_i$, $B_i$ coefficients can now be derived. Consider the first few stages of the $A$-form (substituting $\Delta y_i$ explicitly):
\begin{eqnarray}
\Delta y_2 &=& hk_1,\\
\Delta y_3 &=& hA_2k_1+hk_2,\\
\Delta y_4 &=& hA_3A_2k_1 + hA_3k_2 + h k_3.
\end{eqnarray}
This pattern can be generalized in the following
\begin{lem}
\label{lem1}
The general form of $\Delta y_i$ in the $A$-form, Eqs.~(\ref{eq_2N_dyi})--(\ref{eq_2N_i}), is
\begin{equation}
\label{eq_delta_ym}
\Delta y_i = h\sum_{m=1}^{i-1}\left(\prod_{l=m+1}^{i-1}A_l\right)k_m.
\end{equation}
\end{lem}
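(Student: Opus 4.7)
The plan is to prove the formula by induction on $i$, using the recursion from Eq.~(\ref{eq_2N_dyi}) and the empty-product convention stated earlier in the paper.

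For the base case I take $i=2$. From Eq.~(\ref{eq_2N_dyi}) with $A_1=0$, $c_1=0$, I directly obtain $\Delta y_2 = hf(t,y_0) = hk_1$. The claimed formula at $i=2$ reads $h\sum_{m=1}^{1}\bigl(\prod_{l=2}^{1}A_l\bigr)k_m$, which equals $hk_1$ because the upper bound of the product is smaller than the lower one, so the product is $1$ by the convention fixed in Sec.~\ref{sec_over_2n}. So the base case holds, and it matches the sample stages $\Delta y_2,\Delta y_3,\Delta y_4$ computed just before the lemma.

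For the inductive step I assume the formula holds at stage $i$, and substitute into the defining recursion, which I rewrite as $\Delta y_{i+1} = A_i\,\Delta y_i + hk_i$ using $k_i=f(t+c_ih,y_{i-1}) = f(t+c_ih,y_i)$ in the already-shifted indexing of Eqs.~(\ref{eq_2N_dyi})--(\ref{eq_2N_i}). Pulling $A_i$ inside the sum gives
\begin{equation*}
\Delta y_{i+1} = h\sum_{m=1}^{i-1}\Bigl(A_i\prod_{l=m+1}^{i-1}A_l\Bigr)k_m + hk_i
= h\sum_{m=1}^{i-1}\Bigl(\prod_{l=m+1}^{i}A_l\Bigr)k_m + hk_i.
\end{equation*}
Then I absorb the trailing $hk_i$ into the sum as the $m=i$ term, which is legitimate because the corresponding product $\prod_{l=i+1}^{i}A_l$ equals $1$ by the same empty-product convention. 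This produces exactly $h\sum_{m=1}^{i}\bigl(\prod_{l=m+1}^{i}A_l\bigr)k_m$, which is the claim at stage $i+1$.

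The argument is essentially bookkeeping; the only delicate point is the careful use of the empty-product/empty-sum convention, both to seed the induction at $i=2$ and to merge the $hk_i$ boundary term into the sum in the inductive step. I do not anticipate any substantive obstacle beyond making these index bounds explicit.
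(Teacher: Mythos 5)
Your proof is correct and follows essentially the same route as the paper: the same base case $i=2$ via the empty-product convention, and the same inductive step that extends the product by $A_i$ and absorbs the boundary term $hk_i$ as the $m=i$ summand with an empty product equal to $1$. No gaps.
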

\begin{proof}
By induction. For the first non-trivial stage $i=2$ from Eq.~(\ref{eq_delta_ym})
\begin{equation}
\Delta y_2=h\sum_{m=1}^{2-1}\left(\prod_{l=m+1}^{2-1}A_l\right)k_m=hk_1
\end{equation}
which matches Eq.~(\ref{eq_2N_dyi}).
(Recall the convention on the products and sums, \textit{i.e.}, $\prod_{l=2}^{1}\dots\equiv1$.) 
It is now assumed that the form (\ref{eq_delta_ym}) holds for $i=n$. Using Eq.~(\ref{eq_2N_dyi}) with $i=n+1$ and the definition (\ref{eq_alpha_ki})
\begin{equation}
\Delta y_{n+1}=A_n\Delta y_n + hk_n.
\end{equation}
After substituting $\Delta y_n$ and introducing $1=\prod_{l=n+1}^n A_l$ in the second term the result is
\begin{eqnarray}
\Delta y_{n+1}&=&h\sum_{m=1}^{n-1}A_n\prod_{l=m+1}^{n-1}A_lk_m+h\prod_{l=n+1}^n A_lk_n\\
&=&h\sum_{m=1}^{n-1}\prod_{l=m+1}^{n}A_lk_m+h\sum_{m=n}^{n}\prod_{l=m+1}^nA_lk_m\\
&=&h\sum_{m=1}^{(n+1)-1}\left(\prod_{l=m+1}^{(n+1)-1}A_l\right)k_m.
\end{eqnarray}
This completes the proof.
\end{proof}

With the help of Lemma~\ref{lem1} the next step is to prove
\begin{lem}
\label{lem2}
The $\alpha$-form coefficients are related to the $A$-form coefficients as
\begin{eqnarray}
\alpha_{ij}&=&B_{i-1}\prod_{k=j+1}^{i-1}A_k,\,\,\,\,\, i=2,\dots,s,\label{eq_alpha_A}\\
\beta_i &=& B_s\prod_{k=i+1}^{s}A_k,\,\,\,\,\,i=1,\dots,s.\label{eq_beta_B}
\end{eqnarray}
\end{lem}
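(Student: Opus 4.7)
The plan is to prove both identities by directly substituting the expression for $\Delta y_i$ from Lemma~\ref{lem1} into the update rule of the $A$-form and then matching coefficients of $k_j$ against the $\alpha$-form. Since Lemma~\ref{lem1} has already done the heavy lifting (the inductive unrolling of the $\Delta y_i$ recurrence), what remains is essentially an algebraic identification, so no further induction should be necessary.

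Concretely, I would first recall that in the shifted $A$-form the update reads $y_i = y_{i-1} + B_{i-1}\Delta y_i$. Substituting the expression from Lemma~\ref{lem1} gives
\begin{equation*}
y_i = y_{i-1} + h\,B_{i-1}\sum_{m=1}^{i-1}\left(\prod_{l=m+1}^{i-1}A_l\right)k_m.
\end{equation*}
On the other hand, the $\alpha$-form, Eq.~(\ref{eq_alpha_yi}), expresses the same $y_i$ as $y_i = y_{i-1} + h\sum_{j=1}^{i-1}\alpha_{ij}k_j$. Because the $k_j$ at successive stages may be treated as independent (they depend on the free right-hand-side function $f$), matching coefficients of $k_j$ in the two expressions yields $\alpha_{ij} = B_{i-1}\prod_{l=j+1}^{i-1}A_l$ for $i=2,\dots,s$, which is Eq.~(\ref{eq_alpha_A}).

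For the second identity, I would apply the same argument at the final stage $i=s+1$, using the conventions $\alpha_{s+1,i}\equiv \beta_i$ and $B_s$ in place of $B_{i-1}$. This immediately gives $\beta_i = B_s\prod_{k=i+1}^{s}A_k$, which is Eq.~(\ref{eq_beta_B}). One can also sanity-check the boundary case $i=s$: the empty product equals $1$, so $\beta_s = B_s$, consistent with $\beta_s = b_s$ and $B_s = b_s$ from Eqs.~(\ref{eq_W_ab_to_B}) and~(\ref{eq_beta_b}).

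I do not anticipate a real obstacle here: the only subtle point is the identification of coefficients of $k_j$, which requires the standard observation that the $k_j$ are functionally independent across stages for a generic $f$, so a coefficient match is legitimate. Everything else is bookkeeping, including the index shift from $i$ to $i-1$ on the subscripts of $A$ and $B$ that the shifted $A$-form of Section~\ref{sec_equivalent} introduces.
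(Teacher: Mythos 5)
Your proposal is correct and follows essentially the same route as the paper's proof: substitute the expression for $\Delta y_i$ from Lemma~\ref{lem1} into the update $y_i=y_{i-1}+B_{i-1}\Delta y_i$ and read off the coefficients of $k_j$ by comparison with the $\alpha$-form, treating $i=s+1$ via $\beta_i\equiv\alpha_{s+1,i}$. The additional remarks about functional independence of the $k_j$ and the $i=s$ boundary check are harmless elaborations of the same argument.
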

\begin{proof}
At stage $i$ of the $A$-form, using $\Delta y_i$ given in Eq.~(\ref{eq_delta_ym}), Lemma 1:
\begin{equation}
y_i=y_{i-1}+B_{i-1}\Delta y_i=y_{i-1}+h\sum_{j=1}^{i-1}B_{i-1}\prod_{l=i+1}^{i-1}A_lk_i, \,\,\,\,\,i=2,\dots,s+1.
\end{equation}
Comparing with the $\alpha$-form, Eq.~(\ref{eq_alpha_yi}) for $i=2,\dots,s$ gives Eq.~(\ref{eq_alpha_A}) and for $i=s+1$ and using $\beta_i\equiv\alpha_{s+1,i}$ gives Eq.~(\ref{eq_beta_B}).
\end{proof}

Equations~(\ref{eq_alpha_A}) and (\ref{eq_beta_B}) lead to the following important
\begin{rmk}
For a Runge-Kutta method to be a 2N-storage method with $s$ stages, all of the coefficients $B_i$, $i=1,\dots,s$ must be non-zero. The $A_i$ coefficients for $i=2,\dots,s$ may be zero, however, with rare exceptions~\cite{Ruuth2006SSPRK} almost all 2N-storage methods in the literature have $A_i\neq0$, $i=2,\dots,s$ (and $A_1=0$ for explicit methods). The discussion here is restricted to the 2N-storage methods where all $A_i$ (except $A_1$) are non-zero. Eqs.~(\ref{eq_alpha_A}) and (\ref{eq_beta_B}) then imply that all $\alpha_{ij}$, $i=2,\dots,s$, $j=1,\dots,i-1$ and $\beta_i$, $i=1,\dots,s$ \textit{must also be non-zero}.
\end{rmk}

Note that $\alpha_{j+1,j}=B_j$, $j=1,\dots,s-1$ and $\beta_s=B_s$. As discussed after Eq.~(\ref{eq_beta_b}), this also implies $a_{j+1,j}=B_j$, $b_s=B_s$. Thus, the $B_i$ coefficients are the elements on the diagonal of the $a$-form of the Butcher tableau, as is also evident from Eq.~(\ref{eq_W_ab_to_B}).

Converting the $\alpha$-form to $a$-form in Eqs.~(\ref{eq_alpha_A}) and (\ref{eq_beta_B}) produces the relations between $a_{ij}$, $b_i$ and $A_i$, $B_i$ 
(\ref{eq_W_ab_to_B})--(\ref{eq_Bi})
that have been used in the literature. Before revisiting them, the following discussion is presented using the $\alpha$-form as it allows for deriving the order conditions in general form.

\section{2N-storage constraints for Runge-Kutta methods}
\label{sec_order}
Starting with the $\alpha$-form of a Runge-Kutta method and Eqs.~(\ref{eq_alpha_A}) and (\ref{eq_beta_B}) almost immediately lead to the following
\begin{thm}
\label{thm1}
If a Runge-Kutta method is a 2N-storage method with $A_i\neq0$, $i=2,\dots,s$, then its coefficients in the $\alpha$-form satisfy the following set of equations:
\begin{equation}
\label{eq_oc_2N_alpha}
\frac{\alpha_{j+2,j}}{\alpha_{j+2,j+1}}=
\frac{\alpha_{j+3,j}}{\alpha_{j+3,j+1}}=\dots=
\frac{\alpha_{s,j}}{\alpha_{s,j+1}}=
\frac{\beta_{j}}{\beta_{j+1}},\,\,\,\,\,j=1,\dots,s-2.
\end{equation}
\end{thm}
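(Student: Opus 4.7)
The plan is to prove the theorem as a direct corollary of Lemma~\ref{lem2}. The key observation is that for any fixed row index $i$ with $2\leqslant j+1<i\leqslant s$, the formula $\alpha_{ij}=B_{i-1}\prod_{k=j+1}^{i-1}A_k$ gives
\[
\frac{\alpha_{ij}}{\alpha_{i,j+1}}
=\frac{B_{i-1}\prod_{k=j+1}^{i-1}A_k}{B_{i-1}\prod_{k=j+2}^{i-1}A_k}
=A_{j+1},
\]
so the ratio depends only on $j$ and not on the row index $i$. An analogous computation using $\beta_i=B_s\prod_{k=i+1}^{s}A_k$ yields
\[
\frac{\beta_j}{\beta_{j+1}}
=\frac{B_s\prod_{k=j+1}^{s}A_k}{B_s\prod_{k=j+2}^{s}A_k}
=A_{j+1}.
\]
Hence every ratio appearing in Eq.~(\ref{eq_oc_2N_alpha}) collapses to the single quantity $A_{j+1}$, and the chain of equalities follows.

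Before writing this out I would briefly justify that the denominators are nonzero, so the ratios make sense. The hypothesis gives $A_k\neq0$ for $k=2,\dots,s$, and the Remark preceding the theorem establishes that $B_i\neq0$ for $i=1,\dots,s$ whenever the method admits a 2N-storage representation. Thus every factor in the product expressions for $\alpha_{i,j+1}$ and $\beta_{j+1}$ is nonzero, and the indices on the product $\prod_{k=j+2}^{i-1}A_k$ (resp.\ $\prod_{k=j+2}^{s}A_k$) always lie in the allowed range $\{2,\dots,s\}$ because $j\geqslant1$.

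The proof is essentially a one-line verification, and there is no real obstacle: the whole content of the theorem is contained in Lemma~\ref{lem2}, and the telescoping of the product expressions for $\alpha_{ij}$ and $\beta_i$ is immediate. The only subtlety worth flagging explicitly is that the statement unifies the $\alpha$ rows and the final $\beta$ row into a single chain; this is natural once one recalls the convention $\alpha_{s+1,i}\equiv\beta_i$ introduced in the previous section, under which $\beta_j/\beta_{j+1}$ is literally the ``$i=s+1$'' instance of the same ratio $\alpha_{i,j}/\alpha_{i,j+1}$ with the analogue formula $\beta_i=B_s\prod_{k=i+1}^{s}A_k$ playing the role of Eq.~(\ref{eq_alpha_A}) at the final stage.
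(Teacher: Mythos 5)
Your proposal is correct and follows essentially the same route as the paper: both form the ratios $\alpha_{ij}/\alpha_{i,j+1}$ and $\beta_j/\beta_{j+1}$ from Lemma~\ref{lem2}, observe that each telescopes to $A_{j+1}$ independently of the row index, and conclude the chain of equalities. Your additional remark verifying that the denominators are nonzero is a sensible (if implicit in the paper) touch, but it does not change the argument.
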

\begin{proof}
The following ratios (with $j+1<i\leqslant s$) can be formed using Lemma~\ref{lem2}:
\begin{equation}
\label{eq_ratio_alpha}
\frac{\alpha_{i,j}}{\alpha_{i,j+1}}=\frac{B_{i-1}\prod_{k=j+1}^{i-1}A_k}{B_{i-1}\prod_{k=j+2}^{i-1}A_k}=A_{j+1}.
\end{equation}
For $i=s+1$, similarly,
\begin{equation}
\label{eq_ratio_beta}
\frac{\beta_j}{\beta_{j+1}}=\frac{B_{s}\prod_{k=j+1}^{s}A_k}{B_{s}\prod_{k=j+2}^{s}A_k}=A_{j+1}.
\end{equation}
The ratio $\alpha_{i,j}/\alpha_{i,j+1}$ is independent of $i$, thus all ratios for allowed values of $i$ are the same for fixed $j$ and are equal to $\beta_j/\beta_{j+1}$. This gives Eq.~(\ref{eq_oc_2N_alpha}).
\end{proof}

The converse is also true, since, if Eq.~(\ref{eq_oc_2N_alpha}) holds, one can use the ratios (\ref{eq_ratio_alpha}) and (\ref{eq_ratio_beta}) to \textit{define} the coefficients $A_i$ (with $A_1=0$). The coefficients $B_i$ are always given by the elements on the diagonal of the Butcher tableau and thus the resulting Runge-Kutta method has a proper 2N-storage $A$-form.

In principle, Eq.~(\ref{eq_oc_2N_alpha}) is all one needs. Together with the standard order conditions, \textit{e.g}, Eqs.~(\ref{eq_oc_RK_b})--(\ref{eq_oc_RK_bac}) in \ref{sec_app_oc}, the relations in Eq.~(\ref{eq_oc_2N_alpha}) form a set of nonlinear equations to be solved for $a_{ij}$, $b_i$ and $c_i$ to construct a 2N-storage method with desired $(s,p)$ properties. However, Eq.~(\ref{eq_oc_2N_alpha}) is not yet convenient for practical use. There are in total $s(s-1)(s-2)/6$ relations, while only $(s-1)(s-2)/2$ of them are independent. And the 2N-storage conditions (\ref{eq_oc_2N_alpha}) are in the $\alpha$-form, while the standard order conditions are in the $a$-form. The next step is to transform Eq.~(\ref{eq_oc_2N_alpha}) into a set of $(s-1)(s-2)/2$ independent relations in the $a$-form.

\begin{thm}
\label{th_oc_alpha}
For a Runge-Kutta method to be a 2N-storage method, its coefficients in the $a$-form must satisfy the conditions that  can be represented in the following two equivalent ways:

(I)
\begin{equation}
\label{eq_oc_1a}
a_{ij}(a_{i-1,j-1}-a_{j,j-1})=(a_{i,j-1}-a_{j,j-1})a_{i-1,j},
\end{equation}
\begin{equation}
\label{eq_oc_1b}
a_{i,i-1}(b_{i-2}-a_{i-1,i-2})=(a_{i,i-2}-a_{j-1,i-2})b_{i-1},
\end{equation}
\begin{equation}
i=3,\dots,s,\,\,\,\,\,j=2,\dots,i-2,\nonumber
\end{equation}
or

(II)
\begin{equation}
\label{eq_oc_2}
a_{i,j}(b_{j-1}-a_{j,j-1})=(a_{i,j-1}-a_{j,j-1})b_{j},
\end{equation}
\begin{equation}
i=3,\dots,s,\,\,\,\,\,j=2,\dots,i-1.\nonumber
\end{equation}
\end{thm}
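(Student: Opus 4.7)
The plan is to work in the $\alpha$-form and use Theorem~\ref{thm1}, whose content is that the 2N-storage property (with $A_i \neq 0$ for $i = 2,\dots,s$) is equivalent to the existence, for each $j = 2, \ldots, s-1$, of a scalar $\mu_j$ such that $\alpha_{i,j-1} = \mu_j \alpha_{i,j}$ for all $i = j+1, \ldots, s$ and moreover $\beta_{j-1} = \mu_j \beta_j$. I will first establish the equivalence between this $\alpha$-form condition and form (II), and then show (I) is an algebraic reformulation of (II).

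For the forward direction (2N-storage implies (II)), I would substitute the telescoping identities $a_{i,j} = \sum_{k=j+1}^i \alpha_{k,j}$, $a_{j,j-1} = \alpha_{j,j-1}$, and $b_j = \beta_j + \sum_{k=j+1}^s \alpha_{k,j}$ into both sides of (\ref{eq_oc_2}) and use the ratio $\mu_j$ to obtain $a_{i,j-1} - a_{j,j-1} = \mu_j\, a_{i,j}$ and $b_{j-1} - a_{j,j-1} = \mu_j\, b_j$. Both sides of (\ref{eq_oc_2}) then reduce to $\mu_j\, a_{i,j}\, b_j$.

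For the reverse direction ((II) implies the $\alpha$-ratio condition), I would write (\ref{eq_oc_2}) at two consecutive row indices $i$ and $i+1$ and subtract; the $j$-only terms $b_{j-1}$ and $a_{j,j-1}$ cancel in a controlled way, leaving $\alpha_{i+1,j}(b_{j-1} - a_{j,j-1}) = \alpha_{i+1,j-1}\, b_j$, so the ratio $\alpha_{m,j-1}/\alpha_{m,j} = (b_{j-1} - a_{j,j-1})/b_j$ is independent of $m$. To complete the link with Theorem~\ref{thm1} one must identify this common value with $\beta_{j-1}/\beta_j$; I would do that by evaluating (\ref{eq_oc_2}) at $i = s$, substituting $a_{s,j-1} = b_{j-1} - \beta_{j-1}$ and $a_{s,j} = b_j - \beta_j$ from (\ref{eq_beta_b}), and simplifying, which yields exactly $(b_{j-1} - a_{j,j-1})\beta_j = b_j\, \beta_{j-1}$.

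For the equivalence of (I) and (II): (\ref{eq_oc_1b}) is (\ref{eq_oc_2}) at $j = i-1$, while (\ref{eq_oc_1a}) follows by writing (\ref{eq_oc_2}) at $(i,j)$ and at $(i-1,j)$ for $j < i-1$ and cross-multiplying to eliminate the factor $b_j(b_{j-1} - a_{j,j-1})$. Conversely, (\ref{eq_oc_1a}) together with (\ref{eq_oc_1b}) recovers (\ref{eq_oc_2}) by induction on $i$ for each fixed $j$, starting from the base case $i = j+1$ supplied by (\ref{eq_oc_1b}). The main obstacle I anticipate is keeping the boundary conventions straight — the identification $a_{s+1,j} \equiv b_j$, $\alpha_{s+1,j} \equiv \beta_j$, the fact that the $\alpha$-ratio starts at $i = j+1$ rather than $i = j$ because $\alpha_{j,j} = 0$, and the non-vanishing hypotheses on $B_i$ and $A_i$ (for $i > 1$) that are needed to divide freely; beyond that the argument is a routine algebraic translation between the $\alpha$- and $a$-forms.
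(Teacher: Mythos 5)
Your proposal is correct and follows essentially the same route as the paper: both reduce the statement to the $\alpha$-form ratio conditions of Theorem~\ref{thm1} and translate between the $\alpha$- and $a$-forms via the telescoping identities (\ref{eq_a_from_alpha})--(\ref{eq_beta_b}), the paper merely running the algebra in the opposite direction (summing the cross-multiplied ratios over two indices to build up (\ref{eq_oc_1a})--(\ref{eq_oc_2}) rather than substituting and verifying that both sides reduce to $\mu_j a_{ij} b_j$). The extra material you include --- the converse and the direct derivation of (I) from (II) --- goes beyond what the paper proves here, which is only the necessity direction, with the converse and its non-vanishing caveats handled in the remarks surrounding Theorem~\ref{thm1}.
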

\begin{proof}
Consider any equation from the set in Theorem~\ref{thm1}, Eq.~(\ref{eq_oc_2N_alpha}), at fixed $j=m$ and arbitrary (possibly equal) $p, q > m+1$ (so the corresponding coefficients are non-zero). After cross multiplication:
\begin{equation}
\alpha_{p,m+1}\alpha_{qm}=\alpha_{q,m+1}\alpha_{pm}.
\end{equation}
Summing on $q$ up to some $i-1$ one gets
\begin{equation}
\alpha_{p,m+1}\sum_{q=m+2}^{i-1}\alpha_{qm}=
\alpha_{pm}\sum_{q=m+2}^{i-1}\alpha_{q,m+1}.
\end{equation}
Using the inverse transformation (\ref{eq_a_from_alpha}) and after adding and subtracting $\alpha_{m+1,m}$ ($=a_{m+1,m}$) to the sum on the left:
\begin{equation}
\label{eq_th_alpha_a}
\alpha_{p,m+1}(a_{i-1,m}-a_{m+1,m})=\alpha_{pm}a_{i-1,m+1}.
\end{equation}
Next, summing Eq.~(\ref{eq_th_alpha_a}) on $p$ up to $i$:
\begin{equation}
\label{eq_th_alpha_sum_a}
\left(\sum_{p=m+2}^{i}\alpha_{p,m+1}\right)(a_{i-1,m}-a_{m+1,m})=
\left(\sum_{p=m+2}^{i}\alpha_{pm}\right)a_{i-1,m+1}.
\end{equation}
Again, after using Eq.~(\ref{eq_a_from_alpha}) and after adding and subtracting $\alpha_{m+1,m}$ ($=a_{m+1,m}$) to the sum on the right the result is:
\begin{equation}
a_{i,m+1}(a_{i-1,m}-a_{m+1,m})=
(a_{im}-a_{m+1,m})a_{i-1,m+1}.
\end{equation}
For convenience, set $j=m+1$, with $2\leqslant j<i-1$.
This gives Eq.~(\ref{eq_oc_1a}).

Alternatively, one can pick from the set (\ref{eq_oc_2N_alpha}) an equation that involves both $\alpha$ and $\beta$:
\begin{equation}
\label{eq_th_alpha_beta}
\alpha_{p,m+1}\beta_m=\alpha_{pm}\beta_{m+1}.
\end{equation}
Adding Eq.~(\ref{eq_th_alpha_a}) with $i=s+1$ to Eq.~(\ref{eq_th_alpha_beta}):
\begin{equation}
\alpha_{p,m+1}(\beta_m+a_{sm}-a_{m+1,m})=\alpha_{pm}(\beta_{m+1}+\alpha_{s,m+1}).
\end{equation}
After using Eq.~(\ref{eq_beta_b})
\begin{equation}
\alpha_{p,m+1}(b_m-a_{m+1,m})=\alpha_{pm}b_{m+1}.
\end{equation}
Now summing on $p$ up to $i$:
\begin{equation}
\left(\sum_{p=m+2}^{i}\alpha_{p,m+1}\right)(b_m-a_{m+1,m})=
\left(\sum_{p=m+2}^{i}\alpha_{pm}\right)b_{m+1}.
\end{equation}
Similarly to the transformation of Eq.~(\ref{eq_th_alpha_sum_a}) one gets
\begin{equation}
a_{i,m+1}(b_m-a_{m+1,m})=
(a_{im}-a_{m+1,m})b_{m+1}.
\end{equation}
As before, set $j=m+1$. This gives Eqs.~(\ref{eq_oc_1b}) and (\ref{eq_oc_2}) and completes the proof.
\end{proof}

A brief analysis of Eqs.~(\ref{eq_oc_1a})--(\ref{eq_oc_2}) is now in order. The form (II) (\ref{eq_oc_2}) involves both $a_{ij}$ and $b_i$ for all possible $i$ and $j$.
The form (I), (\ref{eq_oc_1a}) and (\ref{eq_oc_1b}), uses only $a_{ij}$ for most of the conditions, and only for $j=i-1$, where the coefficients $b_i$ are unavoidable, uses them. Eq.~(\ref{eq_oc_1b}) is obviously the same as (\ref{eq_oc_2}) apart from what index ranges are used.

A standard Runge-Kutta method has $s(s+1)/2$ coefficients (accounting also for relations (\ref{eq_c_from_a})). A 2N-storage method has $2s-1$ coefficients (since $A_1$=0). Thus, there must be $s(s+1)/2-(2s-1)=(s-1)(s-2)/2$ independent relations between the coefficients in the Butcher tableu if the method is a 2N-storage method.
This is precisely the number of independent conditions in the form (I) or (II).

There are two important remarks for using the form (I) or form (II) in practice.

\begin{rmk}
The conditions in the $\alpha$-form (\ref{eq_oc_2N_alpha}) are general, there are no special cases since no $\alpha_{ij}$, $\beta_i$ can be zero for a 2N-storage method with $A_i\neq0$, $i=2,\dots,s$.
The conditions in the polynomial form (\ref{eq_oc_1a})--(\ref{eq_oc_2}) may have spurious zero solutions. Therefore when solving the order conditions for $a_{ij}$, $b_i$ one should also evaluate $\alpha_{ij}$, $\beta_i$ and check that Eq.~(\ref{eq_oc_2N_alpha}) is also satisfied.
\end{rmk}

\begin{rmk}
While no $\alpha_{ij}$, $\beta_i$ can be zero, in contrast, some $a_{ij}$, $b_i$ can be zero or degenerate. These special situations need to be handled case by case simultaneously with the standard order conditions (\ref{eq_oc_RK_b})--(\ref{eq_oc_RK_bac}) (and the higher order ones, if a higher order method is being constructed). For example, consider Eq.~(\ref{eq_oc_1b}) for $i=4$:
\begin{equation}
\label{eq_a43_ex}
a_{43}(b_2-a_{32})=(a_{42}-a_{32})b_3.
\end{equation}
If $b_3=0$ (allowed for a 4-stage 2N-storage method), this automatically dictates $b_2=a_{32}$ because $a_{43}$ ($=\alpha_{43}$) cannot be zero. This means that Eq.~(\ref{eq_a43_ex}) is trivially satisfied and it is not as constraining as the other equations in the set. (Had $b_3$ not been zero, one could use Eq.~(\ref{eq_a43_ex}) to solve for $a_{42}$ in terms of $a_{43}$ and the rest.) Examples of how to deal with special cases for a (4,3) 2N-storage Runge-Kutta method are given in \ref{sec_app_spec43}.
\end{rmk}

\section{Relations between the coefficients}
\label{sec_relations}
The relations between the coefficients in the $A$-form and $a$-form will now be revisited taking into account the conditions in Eqs.~(\ref{eq_oc_1a})--(\ref{eq_oc_2}). The inverse relations in Eq.~(\ref{eq_Ai}) and (\ref{eq_Bi}) are recursive. While the recursion may be convenient for computation, it is useful to establish a direct mapping. Also, Eqs.~(\ref{eq_W_ab_to_A}) and (\ref{eq_W_ab0_to_A}) that appear in the literature in the same way as given by Williamson~\cite{WILLIAMSON198048} represent just one possible choice from a more general set that will now be presented.

\begin{thm}
The mapping of the coefficients of the $A$-form to the $a$-form is given by
\begin{eqnarray}
a_{ij} &=& \sum_{k=j+1}^{i} B_{k-1}\prod_{l=j+1}^{k-1}A_l,\label{eq_map_AB_to_a}\\
b_i &=& \sum_{k=i+1}^{s+1} B_{k-1}\prod_{l=i+1}^{k-1}A_l,\label{eq_map_AB_to_b}
\end{eqnarray}
and the inverse mapping from the $a$-form to the $A$-form by
\begin{eqnarray}
A_i &=& \frac{a_{k,i-1}-a_{k-1,i-1}}{a_{ki}-a_{k-1,i}}\,\,\,\,\,\mbox{for any $k$ such that }i<k\leqslant s,\label{eq_map_a_to_A}\\
&&\mbox{or}\nonumber\\
A_i &=& \frac{b_{i-1}-a_{s,i-1}}{b_{i}-a_{s,i}},\label{eq_map_ab_to_A}\\
&&\mbox{and}\nonumber\\
B_i &=& \left\{
\begin{array}{ll}
a_{i+1,i}, & i=1,\dots,s-1,\\
b_s, & i=s.
\end{array}
\right.\label{eq_map_ab_to_B}
\end{eqnarray}
\end{thm}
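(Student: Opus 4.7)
The plan is to build everything on Lemma~\ref{lem2}, which already furnishes closed-form expressions for the $\alpha$-form coefficients in terms of $A_i$ and $B_i$, combined with the bijection (\ref{eq_a_from_alpha})--(\ref{eq_beta_b}) between the $a$-form and the $\alpha$-form. The forward direction then reduces to substitution and index bookkeeping, while the inverse direction amounts to forming the appropriate ratios and invoking the 2N-storage constraints to ensure they are well defined.

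For the forward mapping, I would substitute $\alpha_{kj}=B_{k-1}\prod_{l=j+1}^{k-1}A_l$ from Lemma~\ref{lem2} directly into $a_{ij}=\sum_{k=j+1}^{i}\alpha_{kj}$, Eq.~(\ref{eq_a_from_alpha}), which yields (\ref{eq_map_AB_to_a}) at once. For $b_i$ I would use $b_i=\beta_i+\sum_{k=i+1}^{s}\alpha_{ki}$, Eq.~(\ref{eq_b_from_beta}), and observe that the $k=s+1$ term in the sum in (\ref{eq_map_AB_to_b}) is precisely the expression for $\beta_i$ produced by Lemma~\ref{lem2}, while the remaining terms reproduce $\sum_{k=i+1}^{s}\alpha_{ki}$. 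Equivalently, under the convention $a_{s+1,i}\equiv b_i$ introduced in Sec.~\ref{sec_equivalent}, formula (\ref{eq_map_AB_to_b}) is just (\ref{eq_map_AB_to_a}) specialised to $i\to s+1$, so a single derivation covers both.

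For the inverse mapping of $A_i$, the proof of Theorem~\ref{thm1} has already established, via Eqs.~(\ref{eq_ratio_alpha}) and (\ref{eq_ratio_beta}), that $A_i=\alpha_{k,i-1}/\alpha_{k,i}=\beta_{i-1}/\beta_i$ for any admissible $k$. Substituting the inverse transformation $\alpha_{k,j}=a_{kj}-a_{k-1,j}$ from Eq.~(\ref{eq_alpha_a}) into the first ratio gives (\ref{eq_map_a_to_A}), and substituting $\beta_i=b_i-a_{s,i}$ from Eq.~(\ref{eq_beta_b}) into the second gives (\ref{eq_map_ab_to_A}). For $B_i$, I would specialise Lemma~\ref{lem2} to $j=i-1$: the product $\prod_{l=i}^{i-1}A_l$ is empty and equal to $1$, so $\alpha_{i,i-1}=B_{i-1}$; combining this with Eq.~(\ref{eq_alpha_a}) and $a_{i-1,i-1}=0$ (explicitness) delivers $B_{i-1}=a_{i,i-1}$, and the boundary case $B_s=\beta_s=b_s$ was already recorded right after Lemma~\ref{lem2}.

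The main obstacle is not computational but conceptual. The $k$-independence of the ratio in (\ref{eq_map_a_to_A}) and its coincidence with (\ref{eq_map_ab_to_A}) are not automatic for an arbitrary explicit Runge-Kutta scheme; they are exactly the content of the 2N-storage constraints in Eq.~(\ref{eq_oc_2N_alpha}) (equivalently Theorem~\ref{th_oc_alpha}). I would therefore state the logical dependence explicitly at the outset of the proof: the forward formulas are unconditional once the method is known to be 2N-storage (so that $A_i$, $B_i$ exist in the first place), whereas the inverse formulas are well defined precisely because the 2N-storage constraints hold, which is also what permits the freedom in choosing $k$ in (\ref{eq_map_a_to_A}). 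Once that point is made, every remaining step is a short substitution from earlier formulas in the paper.
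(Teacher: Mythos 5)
Your proposal is correct and follows essentially the same route as the paper: the forward formulas by substituting Lemma~\ref{lem2} into Eqs.~(\ref{eq_a_from_alpha})--(\ref{eq_b_from_beta}), the $A_i$ formulas by inserting Eqs.~(\ref{eq_alpha_a}), (\ref{eq_beta_b}) into the ratios (\ref{eq_ratio_alpha}), (\ref{eq_ratio_beta}), and $B_i$ from $\alpha_{i,i-1}=B_{i-1}$ together with $\beta_s=B_s$. Your explicit remark that the well-definedness and $k$-independence of the inverse ratios rest on the 2N-storage constraints is a worthwhile clarification the paper leaves implicit.
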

\begin{proof}
Eq.~(\ref{eq_map_AB_to_a}) follows from combining Eqs.~(\ref{eq_alpha_A}) and (\ref{eq_a_from_alpha}), Eq.~(\ref{eq_map_AB_to_b}) from Eqs.~(\ref{eq_alpha_A}), (\ref{eq_beta_B}) and (\ref{eq_b_from_beta}).

Substituting Eq.~(\ref{eq_alpha_a}) into Eq.~(\ref{eq_ratio_alpha}) produces
Eq.~(\ref{eq_map_a_to_A}), and substituting Eq.~(\ref{eq_beta_b}) into Eq.~(\ref{eq_ratio_beta}) produces Eq.~(\ref{eq_map_ab_to_A}).

Finally, Eq.~(\ref{eq_map_ab_to_B}) follows from using Eqs.~(\ref{eq_alpha_A}), (\ref{eq_beta_B}) and (\ref{eq_a_from_alpha}), (\ref{eq_b_from_beta}) for $\alpha_{i,i-1}$ and $\beta_s$. This completes the proof.
\end{proof}

Equations such as (\ref{eq_map_AB_to_a}) and (\ref{eq_map_AB_to_b}) appeared in the literature before for particular integration schemes. See, for instance, explicitly spelled out relations for $s=5$ in Eq.~(12) of Ref.~\cite{CK1994} or for $s=7$ in the Appendix of Ref.~\cite{ALLAMPALLI20093837}. Here they are given in full generality.

The mappings~(\ref{eq_map_AB_to_a})--(\ref{eq_map_ab_to_B}) need to be connected with the recursive forms (\ref{eq_W_ab_to_B})--(\ref{eq_Bi}). One can start with the mapping from the $A$-form to $a$-form.

Eq.~(\ref{eq_ratio_alpha}) for fixed $j$ and some $n>j+1$ gives
\begin{equation}
\alpha_{n,j}=A_{j+1}\alpha_{n,j+1}.
\end{equation}
Summing on $n$ up to $i$
\begin{equation}
\sum_{n=j+2}^i\alpha_{n,j}=A_{j+1}\sum_{n=j+2}^i\alpha_{n,j+1}.
\end{equation}
Similarly to the transformations in the proof of Theorem~\ref{th_oc_alpha},
\begin{equation}
\label{eq_aij_intm}
a_{i,j}-a_{j+1,j}=A_{j+1}a_{i,j+1}.
\end{equation}
As $a_{j+1,j}=B_j$, one gets Eq.~(\ref{eq_Ai}) for $j<i-1$ case.

Using Eq.~(\ref{eq_aij_intm}) for $j=i-1$ (and the fact that the method is explicit, $a_{i,i}\equiv0$) gives Eq.~(\ref{eq_Ai}) for $j=i-1$ case.

Now consider Eq.~(\ref{eq_ratio_beta}) at some $j$:
\begin{equation}
\label{eq_betaj_intm}
\beta_j=A_{j+1}\beta_{j+1}.
\end{equation}
Substituting Eq.~(\ref{eq_beta_b}) into~(\ref{eq_betaj_intm}) and rearranging produces
\begin{equation}
b_j=A_{j+1}b_{j+1}+(a_{sj}-A_{j+1}a_{s,j+1}).
\end{equation}
According to the first line of Eq.~(\ref{eq_Ai}) the term in the parentheses is simply $B_j$. This gives Eq.~(\ref{eq_Bi}), case $i<s$. Using Eq.~(\ref{eq_beta_B}) for $i=s$ and Eq.~(\ref{eq_beta_b}) gives Eq.~(\ref{eq_Bi}), case $i=s$.

Consider now a transformation from the $a$-form to $A$-form. Eq.~(\ref{eq_map_ab_to_B}) is the same as Eq.~(\ref{eq_W_ab_to_B}). Next, solve the first case $i<s$ of Eq.~(\ref{eq_Bi}) for $A_{i+1}$ and shift the indexing $i+1\to i$. This gives Eq.~(\ref{eq_W_ab_to_A}).

\subsection{Correct form of Eq.~(\ref{eq_W_ab0_to_A})}
\label{sec_Wcorr_b0}

As discussed earlier, while for a 2N-storage method with $A_i\neq0$, $i=2,\dots,s$ always $\beta_i\neq0$ for any $i=1,\dots,s$, $b_i$ can be zero (except for $b_s=\beta_s$). Eq.~(\ref{eq_W_ab_to_A}) is obviously not applicable and one needs to handle the special case. Any of Eqs.~(\ref{eq_map_a_to_A}), (\ref{eq_map_ab_to_A}) would do as there are no special cases -- if any of the numerators or denominators vanish, then the 2N-storage condition (\ref{eq_oc_2N_alpha}) is not satisfied and the Runge-Kutta method in the $a$-form is not a 2N-storage method with $A_i\neq0$, $i=2,\dots,s$. A form closest to the intended form of Eq.~(\ref{eq_W_ab0_to_A}) can however be introduced for comparison. Consider Eq.~(\ref{eq_map_a_to_A}) for $k=i+1$:
\begin{equation}
A_i=\frac{a_{i+1,i-1}-a_{i,i-1}}{a_{i+1,i}}.\label{eq_W_ab0_to_A_1}
\end{equation}
As $a_{i+1,i}=B_i$, the correct form of Eq.~(\ref{eq_W_ab0_to_A}) is then
\begin{equation}
A_i=\frac{a_{i+1,i-1}-a_{i,i-1}}{B_{i}},\,\,\,\,\,i\neq1.\label{eq_W_ab0_corr}\\
\end{equation}
Notice the difference, -- instead of $c_i$ in Eq.~(\ref{eq_W_ab0_to_A}) there is $a_{i,i-1}$ in Eq.~(\ref{eq_W_ab0_corr}). Since for $i=2$ $c_i=a_{i,i-1}$, Eq.~(\ref{eq_W_ab0_to_A}) is still correct for $b_2=0$, but it is incorrect for $b_i=0$, $i>2$.\footnote{Unless there are additional simplifications that enforce $c_i=a_{i,i-1}$, \textit{e.g.}, $a_{31}=0$, $c_3=a_{32}$.}

The proper form, Eq.~(\ref{eq_W_ab0_corr}), or equivalently (\ref{eq_W_ab0_to_A_1}), is always applicable, so one can use Eq.~(\ref{eq_W_ab0_corr}) instead of branching into $b_i=0$ and $b_i\neq0$ cases. The preferable form though is Eqs.~(\ref{eq_map_a_to_A})--(\ref{eq_map_ab_to_B}) (or (\ref{eq_W_ab0_to_A_1}) which is a particular choice in (\ref{eq_map_a_to_A})) as they follow directly from the 2N-storage order conditions~(\ref{eq_oc_2N_alpha}).

It is quite possible that the special case $b_i=0$ for $i=2$ was improperly generalized to $i>2$ in Ref.~\cite{WILLIAMSON198048}. Eq.~(\ref{eq_W_ab0_to_A}) appeared in multiple publications going back to 1980~\cite{WILLIAMSON198048}; why the error was not noticed until now deserves some discussion.

There are a number of possible reasons. First, the explicit 2N-storage conditions in the $\alpha$- (\ref{eq_oc_2N_alpha}) or $a$-form~(\ref{eq_oc_1a})--(\ref{eq_oc_2}) that allow one to straightforwardly derive the relations (\ref{eq_map_a_to_A})--(\ref{eq_map_ab_to_B}) were not available before.

Second, consider when Eq.~(\ref{eq_W_ab0_to_A}) could be used in practice. If there is a Runge-Kutta method in the standard $a$-form that is known to be a 2N-storage method, one would use Eqs.~(\ref{eq_W_ab_to_B})--(\ref{eq_W_ab0_to_A}) to convert it to the 2N-storage $A$-form. However, this is almost never the case, since standard explicit Runge-Kutta methods are not 2N-storage methods. One has to solve the nonlinear order conditions to develop such methods. Thus, as discussed in Sec.~\ref{sec_over_2n}, the mapping is almost exclusively used in the opposite direction, Eqs.~(\ref{eq_Ai}), (\ref{eq_Bi}): map the $A$-form to $a$-form, substitute into the standard order conditions, \textit{e.g.}, Eqs.~(\ref{eq_oc_RK_b})--(\ref{eq_oc_RK_bac}) and solve for $A_i$, $B_i$.

Third, the 2N-storage order conditions are much more constraining than the standard ones and the resulting systems of nonlinear equations are of higher order, especially when written in the $A$-form. For example, for a 5-stage method a simple term $b_2c_2$ unrolls into
$$
B_1B_5A_5A_4A_3+B_1B_4A_4A_3+B_1B_3A_3+B_1B_2.
$$

Fourth, as a consequence of the previous point, there are very few analytically solvable cases, the most prominent being $(3,3)$ (and, as is derived in Secs.~\ref{sec_lsrk43} and \ref{sec_lsrk53}, $(4,3)$ and $(5,3)$ methods). For $(3,3)$ 2N-storage methods always $b_3\neq0$, and the $b_2=0$ case is accidentally correct due to $c_2=a_{21}$.

Fifth, adding more stages to a method does not simplify, but, in fact, complicates the situation. In a standard Runge-Kutta method in the $a$-form the number of order conditions is constant and adding another stage to an $s$-stage method brings $s+1$ new additional degrees of freedom, while adding another stage to a 2N-storage method brings \textit{more} 2N-storage conditions and gains only two additional degrees of freedom.

Sixth, unless one deliberately imposes $b_i=0$, for $i=3$ (or higher, where applicable) condition, solves the constraints and checks the mappings (\ref{eq_W_ab_to_B})--(\ref{eq_Bi}) in both directions, the incorrectness of Eq.~(\ref{eq_W_ab0_to_A}) may never reveal itself.

\subsection{Examples of 2N-storage methods with $b_i=0$, $i>2$}
\label{sec_check_conv}

Here two coefficient schemes that numerically illustrate incorrectness of Eq.~(\ref{eq_W_ab0_to_A}) for $i>2$ are presented. The Butcher tableau for the $(4,3)$ method is shown in Table~\ref{tab_eqW_example43} and for $(5,3)$ method in Table~\ref{tab_eqW_example53}.

\begin{table}[h]
\centering
\hspace{-15mm}
\parbox{.4\linewidth}{
\[
\begin{array}{c|cccc}
0 & & & \\[1.5mm]
\frac{1}{2} & \frac{1}{2} & & & \\[1.5mm]
\frac{5}{9} & \frac{2}{9} & \frac{1}{3} & & \\[1.5mm]
\frac{3}{4} & \frac{3}{176} & \frac{51}{88} & \frac{27}{176} & \\[1.5mm]
\hline\\[-4mm]
& \frac{2}{9} & \frac{1}{3} & 0  & \frac{4}{9}
\end{array}
\]
}
\parbox{0.1\linewidth}{
\[
\begin{array}{c|c}
\phantom{-}0 & \frac{1}{2} \\[1.5mm]
-\frac{5}{6} & \frac{1}{3} \\[1.5mm]
\phantom{-}\frac{130}{81} & \frac{27}{176} \\[1.5mm]
-\frac{243}{704} & \frac{4}{9} \\[1.5mm]
\end{array}
\]
}
\caption{The Butcher tableau (left) and 2N-storage coefficients $A_i$, $B_i$ (right) for a (4,3) 2N-storage Runge-Kutta method with $b_3=0$.\label{tab_eqW_example43}}
\end{table}

\begin{table}[h]
\centering
\hspace{-20mm}
\parbox{.53\linewidth}{
\[
\begin{array}{c|ccccc}
0 & & & & \\[1.5mm]
\frac{1}{3} & \phantom{-}\frac{1}{3} & & & & \\[1.5mm]
\frac{1}{2} & \phantom{-}\frac{1}{8} & \phantom{-}\frac{3}{8} & & & \\[1.5mm]
\frac{7}{9} & \phantom{-}\frac{1}{18} & \phantom{-}\frac{1}{2} & \phantom{-}\frac{2}{9} & & \\[1.5mm]
1 & \phantom{-}\frac{81}{328} & \phantom{-}\frac{51}{328} & -\frac{16}{41} & \phantom{-}\frac{81}{82} & \\[1.5mm]
\hline\\[-4mm]
& \phantom{-}\frac{1}{18} & \phantom{-}\frac{1}{2} & \phantom{-}\frac{2}{9} & \phantom{-}0  & \phantom{-}\frac{2}{9}
\end{array}
\]
}
\parbox{0.05\linewidth}{
\[
\begin{array}{c|c}
\phantom{-}0 & \frac{1}{3} \\[1.5mm]
-\frac{5}{9} & \frac{3}{8} \\[1.5mm]
\phantom{-}\frac{9}{16} & \frac{2}{9} \\[1.5mm]
-\frac{452}{729} & \frac{81}{82} \\[1.5mm]
-\frac{729}{164} & \frac{2}{9} \\[1.5mm]
\end{array}
\]
}
\caption{The Butcher tableau (left) and 2N-storage coefficients $A_i$, $B_i$ (right) for a (5,3) 2N-storage Runge-Kutta method with $b_4=0$.\label{tab_eqW_example53}}
\end{table}

The reader can verify that the coefficients of these Runge-Kutta methods satisfy the order conditions~(\ref{eq_oc_RK_b})--(\ref{eq_oc_RK_bac}). In the right parts of these tables the 2N-storage coefficients, computed using Eqs.~(\ref{eq_map_ab_to_A}) and (\ref{eq_map_ab_to_B}) are also shown. When the latter are converted back to $a_{ij}$, $b_i$ coefficients, the left parts of the tables are reproduced. However, if the original conversion formulas, Eqs.~(\ref{eq_W_ab_to_B})--(\ref{eq_W_ab0_to_A}) are used, one gets different sets of $A_i$, $B_i$, and as a result, different Butcher tableaux when $A_i$, $B_i$ are converted to $a_{ij}$, $b_i$. The incorrect sets of coefficients that result from using Eq.~(\ref{eq_W_ab0_to_A}) are illustrated in Tables~\ref{tab_eqW_example43_not} and \ref{tab_eqW_example53_not}. They no longer satisfy the order conditions consistent with the third order of global accuracy. A Matlab script illustrating the conversions for the two (4,3) and (5,3) schemes described here and one extra (5,3) scheme with $b_3=0$ is included in \ref{sec_app_matlab}.

\begin{table}[h]
\centering
\parbox{0.1\linewidth}{
\[
\begin{array}{c|c}
\phantom{-}0 & \frac{1}{2} \\[1.5mm]
-\frac{5}{6} & \frac{1}{3} \\[1.5mm]
\phantom{-}\frac{38}{243} & \frac{27}{176} \\[1.5mm]
-\frac{243}{704} & \frac{4}{9} \\[1.5mm]
\end{array}
\]
}\hspace{5mm}
\parbox{.4\linewidth}{
\[
\begin{array}{c|cccc}
0 & & & \\[1.5mm]
\frac{1}{2} & \frac{1}{2} & & & \\[1.5mm]
\frac{5}{9} & \frac{2}{9} & \frac{1}{3} & & \\[1.5mm]
\frac{77}{108} & \frac{961}{4752} & \frac{283}{792} & \frac{27}{176} & \\[1.5mm]
\hline\\[-4mm]
& \frac{2}{9} & \frac{1}{3} & 0  & \frac{4}{9}
\end{array}
\]
}
\caption{An \textbf{incorrect} set of 2N-storage coefficients $A_i$, $B_i$ (left) and improper Butcher tableau that result from using the incorrect special case, Eq.~(\ref{eq_W_ab0_to_A}) for the (4,3) low-storage Runge-Kutta method shown in Table~\ref{tab_eqW_example43}.\label{tab_eqW_example43_not}}
\end{table}

\begin{table}[h]
\centering
\parbox{0.05\linewidth}{
\[
\begin{array}{c|c}
\phantom{-}0 & \frac{1}{3} \\[1.5mm]
-\frac{5}{9} & \frac{3}{8} \\[1.5mm]
\phantom{-}\frac{9}{16} & \frac{2}{9} \\[1.5mm]
-\frac{862}{729} & \frac{81}{82} \\[1.5mm]
-\frac{729}{164} & \frac{2}{9} \\[1.5mm]
\end{array}
\]
}\hspace{10mm}
\parbox{.53\linewidth}{
\[
\begin{array}{c|ccccc}
0 & & & & \\[1.5mm]
\frac{1}{3} & \phantom{-}\frac{1}{3} & & & & \\[1.5mm]
\frac{1}{2} & \phantom{-}\frac{1}{8} & \phantom{-}\frac{3}{8} & & & \\[1.5mm]
\frac{7}{9} & \phantom{-}\frac{1}{18} & \phantom{-}\frac{1}{2} & \phantom{-}\frac{2}{9} & & \\[1.5mm]
\frac{11}{36} & \phantom{-}\frac{2483}{5904 } & -\frac{103}{656} & -\frac{349}{369} & \phantom{-}\frac{81}{82} & \\[1.5mm]
\hline\\[-4mm]
& \phantom{-}\frac{1}{18} & \phantom{-}\frac{1}{2} & \phantom{-}\frac{2}{9} & \phantom{-}0  & \phantom{-}\frac{2}{9}
\end{array}
\]
}
\caption{An \textbf{incorrect} set of 2N-storage coefficients $A_i$, $B_i$ (left) and improper Butcher tableau that result from using the incorrect special case, Eq.~(\ref{eq_W_ab0_to_A}) for the (5,3) low-storage Runge-Kutta method shown in Table~\ref{tab_eqW_example53}.\label{tab_eqW_example53_not}}
\end{table}

\section{Expressing $a_{ij}$ and $A_i$, $B_i$ through $b_i$, $c_i$ for a 2N-storage method}
\label{sec_abc}
An $s$-stage explicit Runge-Kutta method has $s-1$ non-trivial coefficients $c_i$. If they are considered free parameters, then the conditions (\ref{eq_c_from_a}) allow for eliminating $s-1$ coefficients $a_{ij}$ (for instance, $a_{i,1}$) as linearly dependent. Then there are $s(s-1)/2-(s-1)=(s-1)(s-2)/2$ remaining independent $a_{ij}$. This number is the same as the number of conditions in form (I) (\ref{eq_oc_1a}), (\ref{eq_oc_1b}) or (II) (\ref{eq_oc_2}). Also, the number of $b_i$, $c_i$ is $2s-1$, the same as the number of non-trivial $A_i$, $B_i$. This hints that it may be possible to consider $b_i$, $c_i$ as independent parameters and solve the conditions (\ref{eq_oc_1a})--(\ref{eq_oc_2}), \textit{i.e.}, express $a_{ij}$ in terms of $b_i$, $c_i$ for a 2N-storage method. Another hint is given by, \textit{e.g.}, Eq.~(\ref{eq_oc_2}). Notice that it relates the coefficients at stage $i$ to the ones at the previous stages since $j<i$. If those are known, then Eq.~(\ref{eq_oc_2}) is a system of linear equations for the coefficients at stage $i$.

\begin{thm}
In the general case, given that corresponding denominators do not vanish, the $a_{ij}$ coefficients of the $a$-form of a 2N-storage method at stage $i$ can be expressed as:
\begin{equation}
\label{eq_a_bc_rec}
a_{ij}=\frac{b_j}{\displaystyle\sum_{k=1}^j b_k-c_j}\left[c_i-c_j-\sum_{k=j+1}^{i-1}a_{ik}\right].
\end{equation}
\end{thm}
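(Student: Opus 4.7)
The plan is to reduce the claimed identity to a sum of generalized 2N-storage relations, each of which can be collapsed to zero using the $\alpha$-form characterization from Theorem~\ref{thm1}. First, I would clear the denominator and simplify the bracket via the node conditions. Since $c_i=\sum_{k=1}^{i-1}a_{ik}$ and $c_j=\sum_{k=1}^{j-1}a_{jk}$, one has $c_i-c_j-\sum_{k=j+1}^{i-1}a_{ik}=\sum_{k=1}^{j}a_{ik}-c_j$, so the claim is equivalent to
\begin{equation*}
a_{ij}\sum_{k=1}^{j}b_k-b_j\sum_{k=1}^{j}a_{ik}=c_j(a_{ij}-b_j).
\end{equation*}
The $k=j$ term on the left cancels, and substituting $c_j=\sum_{k=1}^{j-1}a_{jk}$ on the right rearranges this to the termwise target
\begin{equation*}
\sum_{k=1}^{j-1}\Bigl[a_{ij}(b_k-a_{jk})-b_j(a_{ik}-a_{jk})\Bigr]=0.
\end{equation*}

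So it suffices to establish the stronger ``generalized form-(II)'' identity $a_{ij}(b_k-a_{jk})=b_j(a_{ik}-a_{jk})$ for every $1\leqslant k<j<i$, which coincides with Eq.~(\ref{eq_oc_2}) only at $k=j-1$. To handle the case $k<j-1$, I would work in the $\alpha$-form. Theorem~\ref{thm1} says that $\alpha_{m,l}/\alpha_{m,l+1}=\beta_l/\beta_{l+1}$ is independent of $m$, and iterating this across $l=k,k+1,\dots,j-1$ gives the cross-multiplied relation $\alpha_{mk}\beta_j=\alpha_{mj}\beta_k$ valid for any $m>j$ (and, under $\alpha_{s+1,\cdot}\equiv\beta_{\cdot}$, for $m=s+1$ too). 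Summing this relation over $m=j+1,\dots,i$ and using $a_{ik}-a_{jk}=\sum_{m=j+1}^{i}\alpha_{mk}$ (from Eq.~(\ref{eq_a_from_alpha})) gives $\beta_j(a_{ik}-a_{jk})=\beta_k\,a_{ij}$, while summing over $m=j+1,\dots,s+1$ gives, via Eq.~(\ref{eq_b_from_beta}) applied at both indices $k$ and $j$, the companion relation $\beta_j(b_k-a_{jk})=\beta_k\,b_j$. Because all $\beta_l$ are nonzero (by the remark following Lemma~\ref{lem2}), dividing one by the other yields precisely the generalized form-(II) identity, so each summand above vanishes and the theorem follows — with the stated caveat that $\sum_{k=1}^{j}b_k-c_j\neq 0$.

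The conceptual hurdle is the reduction step: one has to recognize that the unusual combination $\sum_{k=1}^{j}b_k-c_j$ in the denominator is exactly what pairs the numerator into the telescoping form above, so that the 2N-storage constraints can be applied termwise. A naive downward induction on $j$ at fixed $i$ does not close, because the factor $\sum_{k=1}^{j}b_k-c_j$ is not stage-local; this is why the $\alpha$-form relation has to be summed twice, once over the Butcher rows up to $i$ to produce the $a_{ik}-a_{jk}$ combination and once including the weight row to produce the $b_k-a_{jk}$ combination, after which the ratio is taken. The remainder of the argument is bookkeeping.
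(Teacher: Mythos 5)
Your proof is correct, but it takes a genuinely different route from the paper. The paper proves Eq.~(\ref{eq_a_bc_rec}) by a nested induction (outer over stages $i$, inner upward over columns $j$), at each step invoking only the single adjacent-column condition (\ref{eq_oc_2}) with $k=j-1$ together with the already-established expressions for $a_{i,j-1}$ and $a_{j,j-1}$; the non-vanishing hypotheses are tracked stage by stage as they arise. You instead avoid induction entirely: you clear the denominator, use $c_i=\sum_k a_{ik}$ to rewrite the claim as the telescoped sum $\sum_{k=1}^{j-1}\bigl[a_{ij}(b_k-a_{jk})-b_j(a_{ik}-a_{jk})\bigr]=0$, and then kill each summand by proving the \emph{generalized} form-(II) identity $a_{ij}(b_k-a_{jk})=b_j(a_{ik}-a_{jk})$ for all $1\leqslant k<j$, obtained from the $\alpha$-form ratios of Theorem~\ref{thm1} by iterating across columns and summing twice (once over Butcher rows, once including the weight row). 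This buys two things the paper's argument does not make explicit: a structural explanation of why the combination $\sum_{k\leqslant j}b_k-c_j$ appears, and, as a byproduct, the full family of quadratic relations of which Eq.~(\ref{eq_oc_2}) is only the $k=j-1$ member. Two small cautions: phrase the final step as cross-multiplication of $\beta_j(a_{ik}-a_{jk})=\beta_k a_{ij}$ against $\beta_j(b_k-a_{jk})=\beta_k b_j$ and cancel $\beta_j\neq0$, rather than ``dividing one by the other,'' so that no unstated non-vanishing of $b_j$ or $b_k-a_{jk}$ is needed; and your parenthetical that an induction in $j$ ``does not close'' is not quite right --- the paper's upward-in-$j$ induction does close, because the inductive hypothesis is the identity for $a_{i,j-1}$ (which references the coefficients $a_{ik}$, $k\geqslant j$, only as unevaluated symbols), not a stage-local evaluation.
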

\begin{proof}
Equation~(\ref{eq_a_bc_rec}) will be proved by two nested inductions, referred as \textit{outer} (progressing from stage to stage) and \textit{inner} (within the stage).

To start the outer induction consider the first non-trivial stage $i=2$. With $c_1=0$, from Eq.~(\ref{eq_a_bc_rec})
\begin{equation}
a_{21}=\frac{b_1}{b_1-c_1}(c_2-c_1)=c_2.
\end{equation}
(The sum on $k$ in Eq.~(\ref{eq_a_bc_rec}) is zero.) Thus, Eq.~(\ref{eq_a_bc_rec}) is satisfied for the lowest $i=2$.

Consider now stage $i$ and assume that all stages before $i$ satisfy Eq.~(\ref{eq_a_bc_rec}).

To start the inner induction, consider the first condition at stage $i$ in the form (II) (\ref{eq_oc_2}), $j=2$:
\begin{equation}
a_{i,2}(b_1-a_{21})=(a_{i,1}-a_{21})b_2.
\end{equation}
Assume a general case where no denominators vanish and $b_i\neq0$ for all $i$. Then, using Eq.~(\ref{eq_c_from_a}) to solve for $a_{i,1}$ and using $a_{21}=c_2$:
\begin{equation}
c_i-\sum_{j=2}^{i-1}a_{ij}-c_2=\frac{b_1-c_2}{b_2}a_{i,2}.
\end{equation}
Solving for $a_{i,2}$ gives
\begin{equation}
a_{i,2}=\frac{b_2}{b_1+b_2-c_2}\left[c_i-c_2-\sum_{j=3}^{i-1}a_{ij}\right]
\end{equation}
meaning that Eq.~(\ref{eq_a_bc_rec}) is valid for that coefficient.

Note that although $a_{i,1}$ are eliminated as dependent variables, Eq.~(\ref{eq_a_bc_rec}) is formally valid for them as well:
\begin{equation}
a_{i,1}=\frac{b_1}{b_1-c_1}\left[c_i-c_1-\sum_{j=2}^{i-1}a_{ij}\right]=c_i-\sum_{j=2}^{i-1}a_{ij}.
\end{equation}
Now consider the second, inner induction: assume that at stage $i$ for $k<j$ Eq.~(\ref{eq_a_bc_rec}) holds for $a_{ik}$.
Consider $j$-th condition (\ref{eq_oc_2}). Substitute $a_{i,j-1}$ (assumed valid in the inner induction) and $a_{j,j-1}$ (outer induction). First,
\begin{equation}
\label{eq_aij1}
a_{i,j-1}-a_{j,j-1}=\frac{b_{j-1}}{\displaystyle \sum_{k=1}^{j-1}b_k-c_{j-1}}
\left[c_i-c_j-\sum_{k=j}^{i-1}a_{ik}\right].
\end{equation}
Second,
\begin{equation}
\label{eq_bj1_ajj1}
b_{j-1}-a_{j,j-1}=b_{j-1}\,\frac{\displaystyle \sum_{k=1}^{j-1}b_k-c_{j}}{\displaystyle \sum_{k=1}^{j-1}b_k-c_{j-1}}.
\end{equation}
Then substituting Eqs.~(\ref{eq_aij1}) and (\ref{eq_bj1_ajj1}) into (\ref{eq_oc_2}) and rearranging produces
\begin{equation}
a_{ij}\left(\sum_{k=1}^{j-1}b_k-c_{j}\right)=
b_j\left(c_i-c_j-\sum_{k=j+1}^{i-1}a_{ik}\right)-b_ja_{ij}.
\end{equation}
Solving for $a_{ij}$ produces Eq.~(\ref{eq_a_bc_rec}). This proves the inner induction, as validity for $k<j$ was assumed and validity for $k=j$ was proved. And this also proves the outer induction, as validity for stages before stage $i$ was assumed and validity for the stage $i$ was proved.
\end{proof}

\begin{rmk}
Eq.~(\ref{eq_a_bc_rec}) is recursive. One first evaluates $a_{i,i-1}$, then $a_{i,i-2}$, etc. A more complicated, but non-recursive relation is also available after recursively substituting Eq.~(\ref{eq_a_bc_rec}) into itself:
\begin{equation}
\label{eq_a_bc}
a_{ij}=b_j\sum_{k=1}^{i-j}\frac{\displaystyle \prod_{l=1}^{k-1}\left(\sum_{n=1}^{j+l-1}b_n-c_{j+l}\right)}{\displaystyle \prod_{m=0}^{k-1}\left(\sum_{n=1}^{j+m}b_n-c_{j+m}\right)}(c_{j+k}-c_{j+k-1}).
\end{equation}
Eq.~(\ref{eq_a_bc}) does not seem to be more practically beneficial than Eq.~(\ref{eq_a_bc_rec}), and the proof is therefore omitted.
\end{rmk}

\begin{thm}
In the general case, given that corresponding denominators do not vanish and with convention $b_0=0$, $c_{s+1}=1$, the coefficients $A_i$, $B_i$ of the $A$-form of a 2N-storage method can be expressed as
\begin{eqnarray}
A_i &=& \frac{b_{i-1}}{b_i}\,\frac{\displaystyle \sum_{k=1}^{i-1}b_k-c_{i}}{\displaystyle \sum_{k=1}^{i-1}b_k-c_{i-1}},\label{eq_A_bc}\\
B_i &=& \frac{b_i}{\displaystyle\sum_{k=1}^i b_k-c_i}(c_{i+1}-c_i),\label{eq_B_bc}\\
i &=& 1,\dots,s.
\end{eqnarray}
\end{thm}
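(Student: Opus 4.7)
The plan is to read off both formulas from Williamson's relations (\ref{eq_W_ab_to_B})--(\ref{eq_W_ab_to_A}) combined with the closed form (\ref{eq_a_bc_rec}) for $a_{ij}$ just established. The formula for $B_i$ drops out directly because the relevant inner sum is empty, and the formula for $A_i$ then follows from a one-line algebraic simplification.

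For $B_i$ with $i=1,\dots,s-1$, I would use $B_i=a_{i+1,i}$ from Eq.~(\ref{eq_map_ab_to_B}) and apply Eq.~(\ref{eq_a_bc_rec}) with row index $i+1$ and column index $j=i$. The sum $\sum_{k=i+1}^{i}a_{i+1,k}$ is empty by the convention, so the bracket collapses to $c_{i+1}-c_i$ and Eq.~(\ref{eq_B_bc}) appears immediately. For $i=s$ one has $B_s=b_s$; to check that (\ref{eq_B_bc}) reproduces this at $i=s$, I would invoke the first-order consistency condition $\sum_{k=1}^{s}b_k=1$ together with the convention $c_{s+1}=1$, so that the numerator $b_s(1-c_s)$ and the denominator $1-c_s$ share the factor $1-c_s$ and leave $b_s$.

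For $A_i$ I would use Williamson's formula $A_i=(b_{i-1}-B_{i-1})/b_i$ from Eq.~(\ref{eq_W_ab_to_A}), substitute the $B_{i-1}$ just derived, and simplify:
\begin{equation*}
A_i=\frac{b_{i-1}}{b_i}\left(1-\frac{c_i-c_{i-1}}{\sum_{k=1}^{i-1}b_k-c_{i-1}}\right)=\frac{b_{i-1}}{b_i}\cdot\frac{\sum_{k=1}^{i-1}b_k-c_i}{\sum_{k=1}^{i-1}b_k-c_{i-1}},
\end{equation*}
which is Eq.~(\ref{eq_A_bc}); the convention $b_0=0$ keeps this consistent with $A_1=0$.

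The main obstacle is bookkeeping rather than mathematical difficulty: one must verify that the quantities declared non-vanishing in the hypothesis are exactly the denominators $\sum_{k=1}^{i-1}b_k-c_{i-1}$, $b_i$, and $\sum_{k=1}^{i}b_k-c_i$ appearing in (\ref{eq_A_bc})--(\ref{eq_B_bc}), and one must remember to invoke $\sum b_k=1$ to close the $B_s$ case. A variant avoiding Williamson's formula (which itself excludes $b_i=0$ at the outset) is to apply Eq.~(\ref{eq_ratio_alpha}) at $i=j+2$, writing $A_{j+1}=\alpha_{j+2,j}/\alpha_{j+2,j+1}$ with $\alpha_{ij}=a_{ij}-a_{i-1,j}$ and substituting (\ref{eq_a_bc_rec}) into each of the four $a$ entries; after routine cancellation the same identity appears and, under the stated non-degeneracy, no case split on $b_i$ is required.
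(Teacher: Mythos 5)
Your proposal is correct and follows essentially the same route as the paper: obtain $B_i$ by evaluating Eq.~(\ref{eq_a_bc_rec}) at $a_{i+1,i}$ via Eq.~(\ref{eq_map_ab_to_B}) (with the $i=s$ case closed by $c_{s+1}=1$ and $\sum_k b_k=1$), then substitute that $B_{i-1}$ into Williamson's relation (\ref{eq_W_ab_to_A}) and simplify to get $A_i$. Your algebra for the $A_i$ step is the correct one (the paper's intermediate display actually carries a typo, writing $c_i$ where $c_{i-1}$ should appear in the denominator), and your closing remark about the $\alpha$-ratio route is a sensible alternative but not needed.
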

\begin{proof}
Eq.~(\ref{eq_B_bc}) follows from using the expression for $a_{ij}$ (\ref{eq_a_bc_rec}) in (\ref{eq_map_ab_to_B}), first line. To accommodate $B_s$, the second line of Eq.~(\ref{eq_map_ab_to_B}) is used: with $c_{s+1}=1$ it also gives Eq.~(\ref{eq_B_bc}) for $i=s$.

Next, Eq.~(\ref{eq_B_bc}) is substituted into Eq.~(\ref{eq_W_ab_to_A}):
\begin{equation}
A_i=\frac{1}{b_i}\left[b_{i-1}-\frac{b_{i-1}}{\displaystyle\sum_{k=1}^{i-1} b_k-c_i}(c_{i}-c_{i-1})\right].
\end{equation}
Rearranging gives Eq.~(\ref{eq_A_bc}).
\end{proof}

\begin{rmk}
As a cross check, Eqs.~(\ref{eq_A_bc}), (\ref{eq_B_bc}) provide an alternative route to Eq.~(\ref{eq_a_bc}) -- one substitutes Eqs.~(\ref{eq_A_bc}), (\ref{eq_B_bc}) into Eq.~(\ref{eq_map_AB_to_a}).
\end{rmk}

\subsection{Special case $A_i=-1$, $i>1$}

Relations such as Eqs.~(\ref{eq_a_bc_rec}), (\ref{eq_A_bc}) and (\ref{eq_B_bc}) are applicable in the general case. Special cases, such as $b_i=0$, $c_2=c_3$, $c_3=c_4$, etc. need to be treated case-by-case. There is, however, a standalone special case that will now be considered. It has been noticed in the literature that setting all $A_i=-1$, $i>1$ simplifies the order conditions in the $A$-form and solutions are possible if there are enough stages in the method to accommodate that choice. Thanks to Eq.~(\ref{eq_oc_2N_alpha}), that case is also solvable. In the general case one, uses $c_i$ and $b_i$ as free parameters. As now $s-1$ parameters are fixed by $A_{i>1}=-1$, one has to give up $c_i$ and keep $b_i$ as the free parameters. Thus, the Butcher tableau needs to be expressed completely in terms of $b_i$.

If some $A_{j+1}=-1$, then from Eq.~(\ref{eq_ratio_alpha}) it follows that
\begin{equation}
\alpha_{ij}=-\alpha_{i,j+1}.\nonumber
\end{equation}
Substituting the relation (\ref{eq_alpha_a}) and rearranging,
\begin{equation}
\label{eq_aij_aij1}
a_{ij}+a_{i,j+1}=a_{i-1,j}+a_{i-1,j+1}.
\end{equation}
Similarly, using the corresponding relations for $\beta_i$,
\begin{equation}
\label{eq_bj_bj1}
b_j+b_{j+1}=a_{s,j}+a_{s,j+1}.
\end{equation}
Equation~(\ref{eq_aij_aij1}) is applicable for $i=j+1,\dots,s$. Together with Eq.~(\ref{eq_bj_bj1}) it means that the sum of elements in columns $j$ and $j+1$ for any row $i$ is the same and equal to
\begin{equation}
\label{eq_aij1_bj1}
a_{ij}+a_{i,j+1}=b_j+b_{j+1}.
\end{equation}
These are the simplified 2N-storage constraints for a single $A_{j+1}=-1$. If now all $A_{i>1}=-1$, this applies to all pairs of adjacent columns in the Butcher tableau. Due to its triangular structure, all elements can now be expressed through the weights $b_i$.

One can start at an arbitrary row $i$. For the last non-zero element, following from Eq.~(\ref{eq_aij1_bj1})
\begin{equation}
\label{eq_aii1}
a_{i,i-1}=b_{i-1}+b_i.
\end{equation}
For the element to the left
\begin{equation}
\label{eq_aii2}
a_{i,i-2}+a_{i,i-1}=b_{i-2}+b_{i-1}.
\end{equation}
Subtracting Eq.~(\ref{eq_aii1}) from (\ref{eq_aii2}) one gets
\begin{equation}
a_{i,i-2}=b_{i-2}-b_i.\nonumber
\end{equation}
It is clear by induction that for any $i>j$
\begin{equation}
\label{eq_aij_from_bi}
a_{ij}=b_j+(-1)^{i-j+1}b_i.
\end{equation}
Using Eq.~(\ref{eq_c_from_a}),
\begin{equation}
c_i=\sum_{j=1}^{i-1}a_{ij}=\sum_{j=1}^{i-1}\left(b_j+(-1)^{i-j+1}b_i\right)=
\sum_{j=1}^{i-1}b_j+(-1)^{i+1}b_i\sum_{j=1}^{i-1}(-1)^j.\nonumber
\end{equation}
The sum
\begin{equation}
\sum_{j=1}^{i-1}(-1)^j=\frac{1}{2}\left((-1)^{i-1}-1\right)\nonumber
\end{equation}
is either $-1$ (for even $i$) or $0$ (for odd $i$). Then
\begin{equation}
\label{eq_ci_from_bi}
c_i=\sum_{j=1}^{i-1}b_j+\frac{1}{2}\left(1+(-1)^{i}\right)b_i.
\end{equation}
If written in a recursive form, Eq.~(\ref{eq_ci_from_bi}) matches Eq.~(35) of Ref.~\cite{BERNARDINI20094182}.
Equation~(\ref{eq_ci_from_bi}) encodes a simple pattern:
\begin{eqnarray}
c_2&=&b_1+b_2,\nonumber\\
c_3&=&b_1+b_2,\nonumber\\
c_4&=&b_1+b_2+b_3+b_4,\nonumber\\
c_5&=&b_1+b_2+b_3+b_4,\nonumber\\
&\phantom{=}&\dots\nonumber
\end{eqnarray}
For even number of stages $s$ inevitably $c_s=1$ due to Eq.~(\ref{eq_oc_RK_b}). Using Eqs.~(\ref{eq_aij_from_bi}) and (\ref{eq_ci_from_bi}) one rewrites the standard order conditions, \textit{e.g.}, Eqs.~(\ref{eq_oc_RK_b})--(\ref{eq_oc_RK_bac}), in terms of the weights $b_i$.

\section{General solution for $(4,3)$ 2N-storage Runge-Kutta methods}
\label{sec_lsrk43}
The results developed in the previous sections will now be applied to a $(4,3)$ 2N-storage method. Several schemes with rational coefficients were presented in Ref.~\cite{CK1994}, but not the general solution. Here no extra constraints (\textit{e.g.}, to enforce a particular stability region) are applied, but the general solution to the system of Eqs.~(\ref{eq_oc_RK_b})--(\ref{eq_oc_RK_bac}) and (\ref{eq_oc_1a})--(\ref{eq_oc_2}) that results in a four-stage third-order 2N-storage method is presented.

Using Eq.~(\ref{eq_a_bc_rec}), one eliminates $a_{ij}$ and the resulting system of equations are the Eqs.~(\ref{eq_oc_RK_b})--(\ref{eq_oc_RK_bac}) with $b_i$, $c_i$ only. This is a system of four equations with seven variables $c_2$, $c_3$, $c_4$, $b_1$, $b_2$, $b_3$ and $b_4$. It is beneficial to take $c_i$ as the three free parameters and solve for $b_i$. It turns out that when one progressively eliminates the variables and reduces the system to a single equation, that equation is of third order. However, in the case of $b_2$ the free term is absent, so it is effectively of second order. (And the special cases $b_i=0$ are handled in \ref{sec_app_spec43}.) Thus, the goal is to eliminate $b_1$, $b_3$ and $b_4$ first. $b_1$ is immediately given by Eq.~(\ref{eq_oc_RK_b}):
\begin{equation}
\label{eq_43_b1}
b_1=1-b_2-b_3-b_4.
\end{equation}
Solving Eqs.~(\ref{eq_oc_RK_bc}) and (\ref{eq_oc_RK_bc2}) for $b_3$ and $b_4$ gives
\begin{eqnarray}
b_3 &=& \frac{\displaystyle c_4/2-1/3}{\displaystyle c_3(c_4-c_3)}
-\frac{\displaystyle c_2(c_4-c_2)}{\displaystyle c_3(c_4-c_3)}\,b_2,\label{eq_43_b3}\\
b_4 &=& \frac{\displaystyle 1/3-c_3/2}{\displaystyle c_4(c_4-c_3)}
+\frac{\displaystyle c_2(c_3-c_2)}{\displaystyle c_4(c_4-c_3)}\,b_2.\label{eq_43_b4}
\end{eqnarray}
The last (and the most elaborate) step is to substitute $b_1$, $b_3$ and $b_4$ into Eq.~(\ref{eq_a_bc_rec}) for $a_{43}$, $a_{42}$ and $a_{32}$ and then substitute all variables into Eq.~(\ref{eq_oc_RK_bac}). After tedious but straightforward algebra one arrives at the following result:
\begin{equation}
\label{eq_43_x}
\frac{1}{6}z_1(z_2-z_3)+\left[z_1z_4+z_3z_5+\frac{1}{6}(z_2-z_1)(c_3-c_2)\right]x
+(c_3-c_2)\left[z_4-z_5-(c_4-c_2)(z_1+z_3)\right]x^2=0,
\end{equation}
where
\begin{eqnarray}
x &\equiv& b_2c_2,\\
z_1 &\equiv& c_3c_4(1-c_2)-\frac{1}{2}(c_3+c_4)+\frac{1}{3},\\
z_2 &\equiv& c_3+c_4-\frac{3}{2}c_3c_4-\frac{2}{3},\\
z_3 &\equiv& c_4(c_4-c_3)(1-c_3)+\frac{1}{2}c_3-\frac{1}{3},\\
z_4 &\equiv& c_3c_4 - \frac{1}{2}c_2(c_3+c_4)-\frac{1}{3}(c_3+c_4-2c_2),\\
z_5 &\equiv& \frac{1}{2}(c_3c_4-c_2c_3-c_2c_4)+\frac{1}{2}c_2-\frac{1}{6}(c_3+c_4).
\end{eqnarray}
Solving the quadratic Eq.~(\ref{eq_43_x}) with chosen values of $c_2\neq c_3\neq c_4$ produces $b_2$ and the backward substitution into Eqs.~(\ref{eq_43_b3}), (\ref{eq_43_b4}), (\ref{eq_43_b1}) and (\ref{eq_a_bc_rec}) produces the rest of the Butcher tableau. The $a$-form is then easily converted to the $A$-form with Eqs.~(\ref{eq_map_a_to_A})--(\ref{eq_map_ab_to_B}).

The special cases need to be handled individually and they are solved for a (4,3) method in \ref{sec_app_spec43}.

Reference~\cite{CK1994} argued that a nearly optimal $(4,3)$ method is achieved when
\begin{equation}
\label{eq_43_extra}
\sum_{ijk}b_ia_{ij}a_{jk}c_k=\frac{1}{24}
\end{equation}
is additionally imposed (where the left-hand side for a four-stage method is simply $b_4a_{43}a_{32}c_2$) and presented five (4,3) schemes satisfying this constraint. This also makes the resulting $(4,3)$ scheme fourth-order accurate for linear problems. Some experimentation with the Groebner basis reduction shows that with the extra constraint the resulting final equation for chosen $b_i$ is of a higher order than before. (Sixth order was achieved with the monomial orderings that were tried.) However, since the general solution is now available, if one is looking for schemes with rational coefficients, it is easier to search for rational solutions of Eq.~(\ref{eq_43_x}) first, and then pick those that satisfy Eq.~(\ref{eq_43_extra}). A few additional schemes have been found in the course of this work. Four of them are listed in Tables~\ref{tab_43_set1} and \ref{tab_43_set2}. More schemes are available, however, the illustration here is limited to the ones where the nodes $c_i$ are in increasing order and the weights $b_i\geqslant-3/8$.

\begin{table}[h]
\centering
\hspace{-5mm}
\parbox{.35\linewidth}{
\[
\begin{array}{c|cccc}
0 & & & \\[1.5mm]
\frac{1}{4} & \phantom{-}\frac{1}{4} & & & \\[1.5mm]
\frac{7}{12} & -\frac{1}{12} & \phantom{-}\frac{2}{3} & & \\[1.5mm]
\frac{4}{5} & \phantom{-}\frac{12}{25} & -\frac{23}{50} & \phantom{-}\frac{39}{50} & \\[1.5mm]
\hline\\[-4mm]
& \phantom{-}\frac{1}{6} & \phantom{-}\frac{1}{6} & \phantom{-}\frac{9}{26} & \phantom{-}\frac{25}{78}
\end{array}
\]
}
\parbox{0.1\linewidth}{
\[
\begin{array}{c|c}
\phantom{-}0 & \frac{1}{4} \\[1.5mm]
-\frac{1}{2} & \frac{2}{3} \\[1.5mm]
-\frac{13}{9} & \frac{39}{50} \\[1.5mm]
-\frac{846}{625} & \frac{25}{78} \\[1.5mm]
\end{array}
\]
}\hspace{0.07\linewidth}
\parbox{.35\linewidth}{
\[
\begin{array}{c|cccc}
0 & & & \\[1.5mm]
\frac{1}{5} & \phantom{-}\frac{1}{5} & & & \\[1.5mm]
\frac{3}{5} & -\frac{3}{20} & \phantom{-}\frac{3}{4} & & \\[1.5mm]
\frac{13}{15} & \phantom{-}\frac{143}{540} & -\frac{5}{36} & \phantom{-}\frac{20}{27} & \\[1.5mm]
\hline\\[-4mm]
& -\frac{1}{9} & \phantom{-}\frac{2}{3} & \phantom{-}\frac{5}{72} & \phantom{-}\frac{3}{8}
\end{array}
\]
}
\parbox{0.1\linewidth}{
\[
\begin{array}{c|c}
\phantom{-}0 & \frac{1}{5} \\[1.5mm]
-\frac{7}{15} & \frac{3}{4} \\[1.5mm]
-\frac{6}{5} & \frac{20}{27} \\[1.5mm]
-\frac{145}{81} & \frac{3}{8} \\[1.5mm]
\end{array}
\]
}
\caption{The Butcher tableaux and 2N-storage coefficients $A_i$, $B_i$ for two (4,3) 2N-storage Runge-Kutta methods with rational coefficients that are fourth-order accurate for linear problems, \textit{i.e.}, additionally satisfy Eq.~(\ref{eq_43_extra}), labeled (4,3)$_1$ and (4,3)$_2$, respectively.\label{tab_43_set1}}
\end{table}

\begin{table}[h]
\centering
\hspace{-5mm}
\parbox{.35\linewidth}{
\[
\begin{array}{c|cccc}
0 & & & \\[1.5mm]
\frac{2}{15} & \phantom{-}\frac{2}{15} & & & \\[1.5mm]
\frac{2}{5} & -\frac{7}{20} & \phantom{-}\frac{3}{4} & & \\[1.5mm]
\frac{4}{5} & \phantom{-}\frac{169}{180} & -\frac{5}{4} & \phantom{-}\frac{10}{9} & \\[1.5mm]
\hline\\[-4mm]
& \phantom{-}\frac{3}{8} & -\frac{3}{8} & \phantom{-}\frac{5}{8} & \phantom{-}\frac{3}{8}
\end{array}
\]
}
\parbox{0.1\linewidth}{
\[
\begin{array}{c|c}
\phantom{-}0 & \frac{2}{15} \\[1.5mm]
-\frac{29}{45} & \frac{3}{4} \\[1.5mm]
-\frac{9}{5} & \frac{10}{9} \\[1.5mm]
-\frac{35}{27} & \frac{3}{8} \\[1.5mm]
\end{array}
\]
}\hspace{0.07\linewidth}
\parbox{.35\linewidth}{
\[
\begin{array}{c|cccc}
0 & & & \\[1.5mm]
\frac{13}{28} & \phantom{-}\frac{13}{28} & & & \\[1.5mm]
\frac{4}{7} & -\frac{32}{91} & \phantom{-}\frac{12}{13} & & \\[1.5mm]
\frac{37}{42} & \phantom{-}\frac{1091}{2184} & -\frac{14}{351} & \phantom{-}\frac{91}{216} & \\[1.5mm]
\hline\\[-4mm]
& \phantom{-}\frac{5}{26} & \phantom{-}\frac{4}{13} & \phantom{-}\frac{7}{26} & \phantom{-}\frac{3}{13}
\end{array}
\]
}
\parbox{0.1\linewidth}{
\[
\begin{array}{c|c}
\phantom{-}0 & \frac{13}{28} \\[1.5mm]
-\frac{99}{112} & \frac{12}{13} \\[1.5mm]
-\frac{16}{7} & \frac{91}{216} \\[1.5mm]
-\frac{427}{648} & \frac{3}{13} \\[1.5mm]
\end{array}
\]
}
\caption{The Butcher tableaux and 2N-storage coefficients $A_i$, $B_i$ for two more (4,3) 2N-storage Runge-Kutta methods with rational coefficients that are fourth-order accurate for linear problems, \textit{i.e.}, additionally satisfy Eq.~(\ref{eq_43_extra}), labeled (4,3)$_3$ and (4,3)$_4$, respectively.\label{tab_43_set2}}
\end{table}

\section{General solution for $(5,3)$ 2N-storage Runge-Kutta methods}
\label{sec_lsrk53}
A general solution for (5,3) 2N-storage Runge-Kutta method will now be presented. In this case, there are five free parameters and it is convenient to choose $c_2$, $c_3$, $c_4$, $c_5$, and $b_5$ as such. Other choices result in the final equation of higher order than presented here. The strategy is similar to the (4,3) case: use the mapping in Eq.~(\ref{eq_a_bc_rec}) to express all $a_{ij}$ parameters in terms of $c_i$ and $b_i$, then eliminate $b_1$, $b_3$ and $b_4$ and the final equation for $b_2$ is of second order. The chain of intermediate variables presented below follows the chain of substitutions and no attempt was made to further simplify the expressions. It is assumed that the encountered denominators do not vanish. The first set of intermediate variables is
\begin{equation}
\omega_1=\frac{c_4(1/2-b_5c_5)-(1/3-b_5c_5^2)}{c_3(c_4-c_3)},\,\,\,\,\,
\rho_1=-\frac{c_2(c_4-c_2)}{c_3(c_4-c_3)},
\end{equation}
\begin{equation}
\omega_2=\frac{1/3-b_5c_5^2-c_3(1/2-b_5c_5)}{c_4(c_4-c_3)},\,\,\,\,\,
\rho_2=\frac{c_2(c_3-c_2)}{c_4(c_4-c_3)},
\end{equation}
\begin{equation}
\omega_3=\frac{(c_5-c_4)\omega_2}{1-b_5-c_4},\,\,\,\,\,
\rho_3=\frac{(c_5-c_4)\rho_2}{1-b_5-c_4},
\end{equation}
\begin{equation}
\omega_4=b_5c_4\omega_3-1/6,\,\,\,\,\,
\rho_4=b_5c_4\rho_3,
\end{equation}
\begin{equation}
\omega_5=b_5c_3(c_5-c_3-\omega_3),\,\,\,\,\,
\rho_5=-b_5c_3\rho_3,
\end{equation}
\begin{equation}
\omega_6=1-b_5-c_3-\omega_2,\,\,\,\,\,\rho_6=-\rho_2,
\end{equation}
\begin{equation}
\omega_7=b_5c_2[(c_5-\omega_3)(\omega_6-\omega_1)-c_2\omega_6+c_3\omega_1],\,\,\,\,\,\zeta_7=b_5c_2\rho_3(\rho_1-\rho_6),
\end{equation}
\begin{equation}
\rho_7=b_5c_2[\rho_3(\omega_1-\omega_6)+(c_5-\omega_3)(\rho_6-\rho_1)-c_2\rho_6+c_3\rho_1],
\end{equation}
\begin{equation}
\omega_8=c_2[(c_4-c_2)\omega_6-(c_4-c_3)\omega_1],\,\,\,\,\,
\rho_8=c_2[(c_4-c_2)\rho_6-(c_4-c_3)\rho_1],
\end{equation}
\begin{equation}
\omega_9=1-b_5-c_2-\omega_1-\omega_2,\,\,\,\,\,
\rho_9=-\rho_1-\rho_2.
\end{equation}

The second set of intermediate variables is
\begin{eqnarray}
\chi_1^0&=&\omega_4\omega_6\omega_9,\nonumber\\
\chi_1^1&=&\omega_4\omega_6\rho_9+\omega_4\rho_6\omega_9+\rho_4\omega_6\omega_9,\nonumber\\
\chi_1^2&=&\omega_4\rho_6\rho_9+\rho_4\omega_6\rho_9+\rho_4\rho_6\omega_9,\nonumber\\
\chi_1^3&=&\rho_4\rho_6\rho_9,\nonumber\\
\chi_2^0&=&\omega_1\omega_5\omega_9,\nonumber\\
\chi_2^1&=&\omega_1\omega_5\rho_9+\omega_1\rho_5\omega_9+\rho_1\omega_5\omega_9,\nonumber\\
\chi_2^2&=&\omega_1\rho_5\rho_9+\rho_1\omega_5\rho_9+\rho_1\rho_5\omega_9,\nonumber\\
\chi_2^3&=&\rho_1\rho_5\rho_9,\nonumber\\
\chi_3^0&=&0,\nonumber\\
\chi_3^1&=&\omega_7,\nonumber\\
\chi_3^2&=&\rho_7,\nonumber\\
\chi_3^3&=&\zeta_7,\nonumber\\
\chi_4^0&=&c_3(c_4-c_3)\omega_1\omega_2\omega_9,\nonumber\\
\chi_4^1&=&c_3(c_4-c_3)(\omega_1\omega_2\rho_9+\omega_1\rho_2\omega_9+\rho_1\omega_2\omega_9),\nonumber\\
\chi_4^2&=&c_3(c_4-c_3)(\omega_1\rho_2\rho_9+\rho_1\omega_2\rho_9+\rho_1\rho_2\omega_9),\nonumber\\
\chi_4^3&=&c_3(c_4-c_3)\rho_1\rho_2\rho_9,\nonumber\\
\chi_5^0&=&0,\nonumber\\
\chi_5^1&=&\omega_2\omega_8,\nonumber\\
\chi_5^2&=&\omega_2\rho_8+\rho_2\omega_8,\nonumber\\
\chi_5^3&=&\rho_2\rho_8,\nonumber\\
\chi_6^0&=&0,\nonumber\\
\chi_6^1&=&c_2(c_3-c_2)\omega_1\omega_6,\nonumber\\
\chi_6^2&=&c_2(c_3-c_2)(\omega_1\rho_6+\rho_1\omega_6),\nonumber\\
\chi_6^3&=&c_2(c_3-c_2)\rho_1\rho_6.
\end{eqnarray}
Then the final coefficients are
\begin{equation}
{\cal C}_k=\sum_{n=1}^6\chi_n^k,\,\,\,k=0,\dots3
\end{equation}
and the equation for $b_2$ is
\begin{equation}
{\cal C}_3b_2^3+{\cal C}_2b_2^2+{\cal C}_1b_2+{\cal C}_0=0.
\end{equation}
Direct substitution shows that ${\cal C}_3=0$, so effectively the equation for $b_2$ is of second order. After solving for $b_2$,
\begin{eqnarray}
b_3&=&\omega_1+\rho_1b_2,\\
b_4&=&\omega_2+\rho_2b_2,\\
b_1&=&1-b_2-b_3-b_4-b_5,
\end{eqnarray}
and the rest of the Butcher tableau is constructed using Eq.~(\ref{eq_a_bc_rec}).

There seems to be no extra benefit in solving special cases for the (5,3) method, therefore they are not pursued further. In general, the special cases $b_i=0$ lead to linear equations that can be derived with the same strategies as the ones explained in \ref{sec_app_spec43} for the (4,3) methods. In fact, the (5,3) method in Table~\ref{tab_eqW_example53} of Sec.~\ref{sec_Wcorr_b0} illustrating the error in Eq.~(\ref{eq_W_ab0_to_A}) was derived that way.

It may be that the trend continues and a 2N-storage method with third order of global accuracy always leads to the final equation for $b_2$ (or $b_3$, or $b_4$) which is of the order not larger than three. The reason is that for an $s$-stage method there are $2s-1$ parameters $b_i$, $c_i$ and four constraints, Eqs.~(\ref{eq_oc_RK_b})--(\ref{eq_oc_RK_bac}) (once $a_{ij}$ are expressed through $b_i$, $c_i$ with the help of Eq.~(\ref{eq_a_bc_rec})). This means that independently of $s$, one can choose $b_1$, $b_2$, $b_3$ and $b_4$ as the variables to solve for. The way they enter in the numerators and denominators is similar to the (4,3) and (5,3) methods and thus, when all terms are brought to the common denominator, the final equation is again of the order not more than three.
It may also be that the process of deriving that final equation can be automated with symbolic manipulation software, given the regular structure of the $a_{ij}$ coefficients exhibited in Eq.~(\ref{eq_a_bc_rec}).

\section{Numerical experiments}
\label{sec_num}
If one is interested in developing a 2N-storage method of a chosen order for a problem at hand, the recommendation is to solve the nonlinear algebraic system of 2N-storage order conditions: the standard order conditions, Eqs.~(\ref{eq_oc_RK_b})--(\ref{eq_oc_RK_bac}) (and the higher order ones, if a higher-order method is sought) and the 2N-storage ones, Eqs.~(\ref{eq_oc_1a}) and (\ref{eq_oc_1b}) or (\ref{eq_oc_2}) together with whatever additional constraints are desirable. This is easier to do in the original Butcher tableau parameters rather than in the 2N-storage parameters $A_i$, $B_i$.

\begin{table}[h]
\centering
\parbox{.6\linewidth}{
\centering
\[
\begin{array}{c|ccccc}
0 & & & & \\[1.5mm]
\frac{1}{4} & \phantom{-}\frac{1}{4} & & & & \\[1.5mm]
\frac{8}{15} & -\frac{16}{225} & \phantom{-}\frac{136}{225} & & & \\[1.5mm]
\frac{12}{17} & \phantom{-}\frac{832}{1005} & -\frac{18584}{17085} & \phantom{-}\frac{1100}{1139} & & \\[1.5mm]
\frac{5}{6} & -\frac{13213}{60300} & \phantom{-}\frac{13312}{15075} & -\frac{1875}{11792} & \phantom{-}\frac{289}{880} & \\[1.5mm]
\hline\\[-4mm]
& \phantom{-}\frac{15}{94} & \phantom{-}\frac{8}{47} & \phantom{-}\frac{1025}{4136 } & \phantom{-}\frac{867}{4136}  & \phantom{-}\frac{10}{47}
\end{array}
\]
}
\parbox{0.15\linewidth}{
\centering
\[
\begin{array}{c|c}
\phantom{-}0 & \frac{1}{4} \\[1.5mm]
-\frac{17}{32} & \frac{136}{225} \\[1.5mm]
-\frac{9856}{5625} & \frac{1100}{1139} \\[1.5mm]
-\frac{1127375}{329171} & \frac{289}{880} \\[1.5mm]
-\frac{4913}{8800} & \frac{10}{47} \\[1.5mm]
\end{array}
\]
}
\caption{The Butcher tableau and the 2N-storage coefficients for the (5,3)$_1$ method that produces the lowest error across the three test problems.\label{tab_53_1}}
\end{table}
\begin{table}[h]
\centering
\parbox{.6\linewidth}{
\centering
\[
\begin{array}{c|ccccc}
0 & & & & \\[1.5mm]
\frac{1}{4} & \phantom{-}\frac{1}{4} & & & & \\[1.5mm]
\frac{4}{7} & -\frac{8}{49} & \phantom{-}\frac{36}{49} & & & \\[1.5mm]
\frac{2}{3} & \phantom{-}\frac{163}{2394} & \phantom{-}\frac{3484}{10773} & \phantom{-}\frac{847}{3078} & & \\[1.5mm]
\frac{13}{14} & \phantom{-}\frac{12053}{11172} & \phantom{-}\frac{2960}{25137} & \phantom{-}\frac{847}{2052} & \phantom{-}\frac{3}{14} & \\[1.5mm]
\hline\\[-4mm]
& \phantom{-}\frac{37}{258} & \phantom{-}\frac{220}{1161} & \phantom{-}\frac{847}{2322} & \phantom{-}\frac{6}{43}  & \phantom{-}\frac{7}{43}
\end{array}
\]
}
\parbox{0.15\linewidth}{
\centering
\[
\begin{array}{c|c}
\phantom{-}0 & \frac{1}{4} \\[1.5mm]
-\frac{9}{16} & \frac{36}{49} \\[1.5mm]
-\frac{62032}{41503} & \frac{847}{3078} \\[1.5mm]
\phantom{-}\frac{5929}{9234} & \frac{3}{14} \\[1.5mm]
-\frac{45}{98} & \frac{7}{43} \\[1.5mm]
\end{array}
\]
}
\caption{The Butcher tableau and the 2N-storage coefficients for the (5,3)$_2$ method that has one of the largest stability regions.\label{tab_53_2}}
\end{table}

As closed-form expressions have been derived for (4,3) and (5,3) schemes, several schemes with rational coefficients are presented here, solely for illustration purposes. A simple grid search relatively quickly revealed over two million of (5,3) methods with rational coefficients and no attempt was made to find ``the best'' scheme over that set. What is optimal really depends on the application. The four (5,3) schemes presented here simply illustrate the possibilities. If one prefers schemes with rational coefficients, for (4,3) and (5,3) (and possibly ($s>5$,3)) schemes, one can generate rational solutions and then filter out the ones with desired additional characteristics.

To compare the methods, three test problems are considered:
\begin{enumerate}
\item $y'=y\cos(x)$, $y(0)=1$, $0\leqslant x\leqslant20$, with the solution $y_{exact}(x)=\exp(\sin(x))$.
\item $y'=4y\sin^3(x)\cos(x)$, $y(0)=1$, $0\leqslant x\leqslant20$, with the solution $y_{exact}(x)=\exp(\sin^4(x))$.
\item $y'=-y^{3/2}/2$, $y(0)=1$, $0\leqslant x\leqslant20$, with the solution $y_{exact}(x)=1/\sqrt{1+x}$.
\end{enumerate}
The first two are from Ref.~\cite{CK1994} and the third, with an enlarged interval, from Ref.~\cite{BBBproc2004}.

\begin{table}[h]
\centering
\parbox{.6\linewidth}{
\centering
\[
\begin{array}{c|ccccc}
0 & & & & \\[1.5mm]
\frac{2}{9} & \phantom{-}\frac{2}{9} & & & & \\[1.5mm]
\frac{1}{2} & -\frac{1}{8} & \phantom{-}\frac{5}{8} & & & \\[1.5mm]
\frac{13}{18} & \phantom{-}\frac{179}{360} & -\frac{99}{200} & \phantom{-}\frac{18}{25} & & \\[1.5mm]
\frac{9}{10} & \phantom{-}\frac{99}{1000} & \phantom{-}\frac{1109}{5000} & \phantom{-}\frac{162}{625} & \phantom{-}\frac{8}{25} & \\[1.5mm]
\hline\\[-4mm]
& \phantom{-}\frac{1}{6} & \phantom{-}\frac{1}{10} & \phantom{-}\frac{27}{80} & \phantom{-}\frac{17}{64}  & \phantom{-}\frac{25}{192}
\end{array}
\]
}
\parbox{0.15\linewidth}{
\centering
\[
\begin{array}{c|c}
\phantom{-}0 & \frac{2}{9} \\[1.5mm]
-\frac{5}{9} & \frac{5}{8} \\[1.5mm]
-\frac{14}{9} & \frac{18}{25} \\[1.5mm]
-\frac{36}{25} & \frac{8}{25} \\[1.5mm]
-\frac{8}{25} & \frac{25}{192} \\[1.5mm]
\end{array}
\]
}
\caption{The Butcher tableau and the 2N-storage coefficients for the (5,3)$_3$ method that is of fourth order for linear problems and has a somewhat extended stability region.\label{tab_53_3}}
\end{table}

\begin{table}[h]
\centering
\parbox{.6\linewidth}{
\centering
\[
\begin{array}{c|ccccc}
0 & & & & \\[1.5mm]
\frac{1}{4} & \phantom{-}\frac{1}{4} & & & & \\[1.5mm]
\frac{1}{2} & -\frac{1}{6} & \phantom{-}\frac{2}{3} & & & \\[1.5mm]
\frac{3}{4} & \phantom{-}\frac{1}{4} & \phantom{-}0 & \phantom{-}\frac{1}{2} & & \\[1.5mm]
1 & \phantom{-}0 & \phantom{-}\frac{2}{5} & \phantom{-}\frac{1}{5} & \phantom{-}\frac{2}{5} & \\[1.5mm]
\hline\\[-4mm]
& \phantom{-}\frac{1}{9} & \phantom{-}\frac{2}{9} & \phantom{-}\frac{1}{3} & \phantom{-}\frac{2}{9}  & \phantom{-}\frac{1}{9}
\end{array}
\]
}
\parbox{0.15\linewidth}{
\centering
\[
\begin{array}{c|c}
\phantom{-}0 & \frac{1}{4} \\[1.5mm]
-\frac{5}{8} & \frac{2}{3} \\[1.5mm]
-\frac{4}{3} & \frac{1}{2} \\[1.5mm]
-\frac{3}{4} & \frac{2}{5} \\[1.5mm]
-\frac{8}{5} & \frac{1}{9} \\[1.5mm]
\end{array}
\]
}
\caption{The Butcher tableau and the 2N-storage coefficients for the (5,3)$_4$ method whose stage before the last one is second-order accurate, and thus the method can be used as an embedded 3,2 pair for adaptive step size integration.\label{tab_53_4}}
\end{table}

The three test problems are integrated with the five (4,3) schemes of Ref.~\cite{CK1994}, the new (4,3)$_1$ scheme shown in Table~\ref{tab_43_set1} (left) and the new (5,3)$_i$ $i=1,\dots,4$ schemes shown in Tables~\ref{tab_53_1}--\ref{tab_53_4}. The error is defined as the distance between the exact and numerical solution, as function of the step size $h$:
\begin{equation}
d(h)=|y(x=20,h)-y_{exact}(x=20)|.
\end{equation}

The results for the ten methods are shown in Figs.~\ref{fig_test12} and \ref{fig_test3}. The results for the five (4,3) schemes of Ref.~\cite{CK1994} are shown in gray to give a rough idea of their performance as a baseline, and the results from the five new schemes are shown in color. It happens that the new (4,3)$_1$ scheme performs slightly better than the schemes of Ref.~\cite{CK1994} across the three problems. This is, however, a byproduct of a grid search over rational coefficients, rather than a deliberate attempt to optimize a (4,3) scheme for these specific test problems. For the (5,3) methods some search over a small subset of the uncovered two million schemes has been done to illustrate the following points. If one is concerned with optimizing a (5,3) method across these three test problems, the (5,3)$_1$ scheme in Table~\ref{tab_53_1} may be a reasonable choice. If one prefers to use the ``tall trees'' (linear) order conditions to manipulate the stability region (as is often the goal of introducing additional stages, beyond what is required to achieve a given global order of accuracy), (5,3)$_2$ scheme shown in Table~\ref{tab_53_2} has one of the larger stability regions, Fig.~\ref{fig_test3} (right), on the other hand, its performance in terms of the error $d(h)$ for the three test problems is unimpressive. If one prefers a (5,3) scheme which is fourth-order accurate for linear problems, \textit{i.e.}, satisfies additionally Eq.~(\ref{eq_43_extra}), and has a reasonable stability region, the (5,3)$_3$ scheme shown in Table~\ref{tab_53_3} may be a choice. If one is interested in computationally cheap adaptive step size integrator, the (5,3)$_4$ scheme in Table~\ref{tab_53_4} has additional properties:
\begin{eqnarray}
a_{51}+a_{52}+a_{53}+a_{54}&=&1,\nonumber\\
a_{52}c_2+a_{53}c_3+a_{54}c_4&=&\frac{1}{2}.\nonumber
\end{eqnarray}
In other words, the stage before the last stage provides (for free) a second-order estimate, and thus the method can be used as an embedded 3,2 pair where no extra computation is needed.

The stability regions, defined in the standard way~\cite{ButcherBook} for the new (4,3)$_1$ and (5,3)$_i$, $i=1,\dots,4$ schemes are shown in Fig.~\ref{fig_test3} (right).

\begin{figure}
\centering
\includegraphics[width=0.49\textwidth]{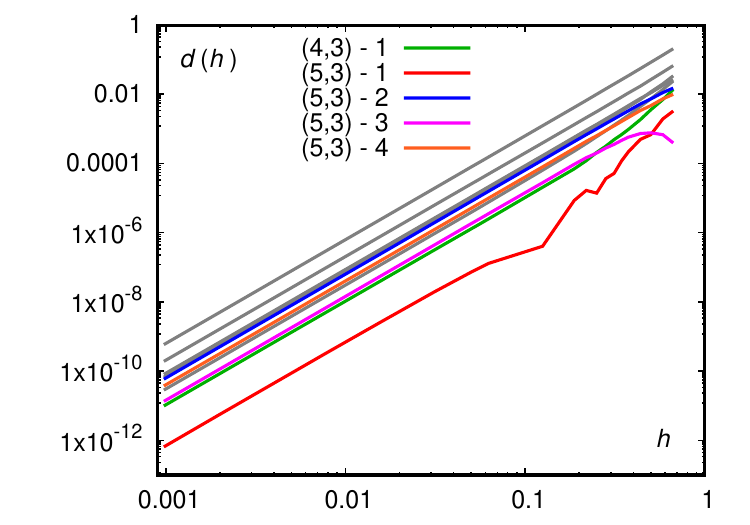}
\hfill
\includegraphics[width=0.49\textwidth]{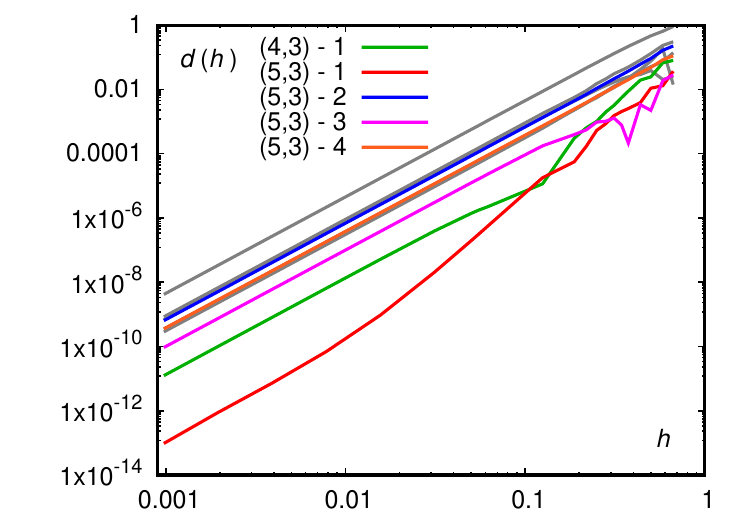}
\caption{Scaling of the new (4,3)$_1$ and (5,3)$_i$, $i=1,\dots4$ methods, shown in color, and the five (4,3) methods of Ref.~\cite{CK1994}, shown in gray, for test problems 1 (left) and 2 (right).\label{fig_test12}
}
\end{figure}
\begin{figure}
\centering
\includegraphics[width=0.49\textwidth]{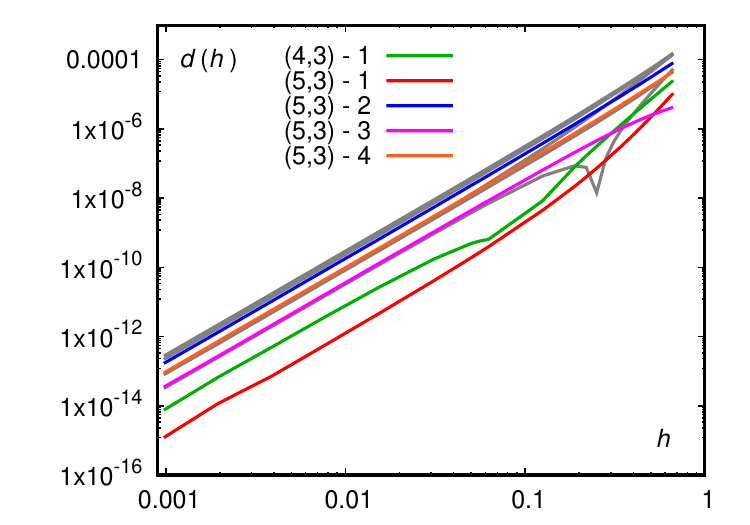}
\hfill
\includegraphics[width=0.49\textwidth]{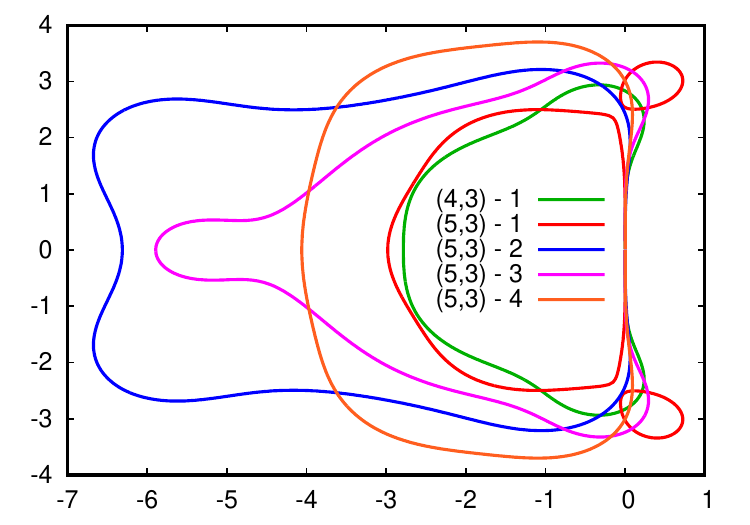}
\caption{Scaling of the new (4,3)$_1$ and (5,3)$_i$, $i=1,\dots4$ methods, shown in color, and the five (4,3) methods of Ref.~\cite{CK1994}, shown in gray, for test problem 3 (left). Stability regions for all (4,3) methods (identical due to satisfying Eq.~(\ref{eq_43_extra})) and (5,3) methods considered (right).\label{fig_test3}
}
\end{figure}

\section{Conclusion}
\label{sec_concl}
It appears that 2N-storage Runge-Kutta methods of Williamson~\cite{WILLIAMSON198048} possess some interesting structure that was explored in this work. The main results are the explicit 2N-storage conditions in the $\alpha$-form, Eq.~(\ref{eq_oc_2N_alpha}), and the $a$-form, Eqs.~(\ref{eq_oc_1a})--(\ref{eq_oc_2}), relations between the $a$-, $\alpha$- and $A$-forms of the method, Eqs.~(\ref{eq_a_from_alpha})--(\ref{eq_beta_b}), (\ref{eq_alpha_A}), (\ref{eq_beta_B}) and (\ref{eq_map_AB_to_a})--(\ref{eq_map_ab_to_B}). (The $a$-, $\alpha$- and $A$-form of a 2N-storage Runge-Kutta method are defined in Sec.~\ref{sec_over_2n}.) Next, all $a_{ij}$ parameters of the Butcher tableau of a 2N-storage method, independently of the order of global accuracy, can be expressed through the nodes $c_i$ and weights $b_i$, either recursively, Eq.~(\ref{eq_a_bc_rec}), or directly, Eq.~(\ref{eq_a_bc}). Having the explicit 2N-storage conditions spelled out for the first time uncovered an error in the special case of the Williamson formula~\cite{WILLIAMSON198048}. The suggestion is to use Eqs.~(\ref{eq_map_AB_to_a})--(\ref{eq_map_ab_to_B}) (or Eq.~(\ref{eq_W_ab0_to_A_1})) rather than the original formulas of Williamson, as the new relations are more transparent and do not require branching into special cases: if any of the encountered denominators vanish, the method is not a 2N-storage method anyway. Next, the 2N-storage coefficients $A_i$, $B_i$ can also be completely expressed through $b_i$ and $c_i$ as well, see Eqs.~(\ref{eq_A_bc}), (\ref{eq_B_bc}). Armed with these new relations, the full set of 2N-storage order conditions (including the standard ones at third order) is solved for (4,3) and (5,3) methods. The solution is presented in the form of a quadratic equation for $b_2$ (in both cases), whose coefficients are functions of free parameters. From the solution for $b_2$ full Butcher tableau for (4,3) and (5,3) 2N-storage methods is constructed with the relations presented here. For methods of order four and higher one has to resort to numerical solutions. It appears to be an easier task in the $a$-form, \textit{i.e.}, solving the standard order conditions together with the relations in Eqs.~(\ref{eq_oc_1a}), (\ref{eq_oc_1b}) or (\ref{eq_oc_2}). The validity of the solutions is illustrated with several (4,3) and (5,3) schemes in Sec.~\ref{sec_num}.

These new developments may bring the matters a step closer to understanding a remarkable feature of the 2N-storage Runge-Kutta methods of Williamson type: they appear to be automatically Lie group integrators, as examined in Ref.~\cite{Bazavov2021}, \textit{i.e.}, for an equation of the form
\begin{equation}
\frac{dY}{dt}=A(t,Y)Y,
\end{equation}
where $A(t,Y)$ is a matrix and $Y$ is a matrix or a vector, a simple modification of Eqs.~(\ref{eq_2N_W_dyi})--(\ref{eq_2N_W_i}) to
\begin{eqnarray}
\Delta Y_i &=& A_{i}\Delta Y_{i-1}+hA(t+c_{i}h,Y_{i-1}),\label{eq_2N_Lie_dyi}\\
Y_i &=&  \exp( B_{i}\Delta Y_i) Y_{i-1},\\
i &=&1,\dots,s.\label{eq_2N_Lie_i}
\end{eqnarray}
produces a valid geometric integrator with the same order of accuracy as the original classical 2N-storage scheme.

In fact, the classical (6,4) 2N-storage Runge-Kutta scheme of Ref.~\cite{BERLAND20061459} works remarkably well as a Lie group integrator for integration of the SU(3) gradient flow in lattice gauge theory, as concluded by Ref.~\cite{BazavovChuna2021} where several schemes were compared. For that reason the coefficients of that scheme were determined as a byproduct of this work with more significant digits than were given in the original publication~\cite{BERLAND20061459} (12 significant digits), allowing for applications in full or higher than the standard 64-bit floating point precision. This is achieved quite easily by solving the eight standard fourth-order order conditions together with the 2N-storage constraints~(\ref{eq_oc_1a}), (\ref{eq_oc_1b}) or (\ref{eq_oc_2}) with a standard Newton-Raphson method~\cite{NumRecBook}. The only feature worth mentioning is the use of the MPFR library~\cite{MPFR} for arbitrary precision arithmetic (1,000-bit in this case). For the interested reader the coefficients are given in \ref{sec_app_bbb} with 42-digit precision.

The new relations presented in this work uncover an interesting structure of the 2N-storage methods and some of their useful properties. Further investigation is warranted to understand if similar relations can be derived for other types of low-storage (or, perhaps, more general) Runge-Kutta methods.

\bigskip
\textbf{Acknowledgements.}
The author expresses deep gratitude to the CERN Theory Division for hospitality and financial support where a large fraction of this work was carried out during the sabbatical leave, and, in particular, to Matteo Di Carlo, Felix Erben, Philippe de Forcrand, Andreas J\"{u}ttner, Simon Kuberski and Tobias Tsang. The author is indebted to Steven Gottlieb and Leon Hostetler for careful reading and comments on the manuscript.
This work was in part supported
by the U.S. National Science Foundation under award
PHY23-09946.

\appendix

\section{Butcher tableau for an explicit Runge-Kutta method}
\label{sec_app_btab}
\begin{table}[h]
\centering
\hspace{-15mm}
\parbox{.4\linewidth}{
\[
\begin{array}{c|cccc}
0   &  & & & \\[1.5mm]
c_2 & a_{21} & & & \\[1.5mm]
c_3 & a_{31} & a_{32} & & \\[1.5mm]
c_4 & a_{41} & a_{42} & a_{43} & \\[1.5mm]
\hline\\[-4mm]
& b_1 & b_2 & b_3  & b_4
\end{array}
\]
}
\parbox{0.1\linewidth}{
\[
\begin{array}{c|c}
A_1 & B_1 \\[1.5mm]
A_2 & B_2 \\[1.5mm]
A_3 & B_3 \\[1.5mm]
A_4 & B_4 \\[1.5mm]
\end{array}
\]
}
\caption{A standard form of the Butcher tableau for a four-stage explicit Runge-Kutta method (left) and a corresponding table with the 2N-storage coefficients (right).\label{tab_bt_4}}
\end{table}

Butcher tableau for a four-stage explicit Runge-Kutta method is shown in the left part of Table~\ref{tab_bt_4}. For an explicit method the first row contains only zeros (\textit{i.e.}, $c_1=0$) and is often omitted. The coefficients $c_i$ are often called \textit{nodes} and $b_i$ \textit{weights} of the Runge-Kutta method. A set of 2N-storage coefficients $A_i$, $B_i$ can also be arranged vertically, as shown in the right part of Table~\ref{tab_bt_4}. For an explicit method always $A_1=0$.

\section{Order conditions for a Runge-Kutta method up to order 3}
\label{sec_app_oc}
The order conditions for a Runge-Kutta method to be globally of third order of accuracy are:
\begin{eqnarray}
\sum_i b_i &=&1,\label{eq_oc_RK_b}\\
\sum_i b_ic_i &=&\frac{1}{2},\label{eq_oc_RK_bc}\\
\sum_i b_ic_i^2 &=&\frac{1}{3},\label{eq_oc_RK_bc2}\\
\sum_i\sum_j b_i a_{ij} c_j &=&\frac{1}{6}.\label{eq_oc_RK_bac}
\end{eqnarray}

It is understood that in summations in Eqs.~(\ref{eq_oc_RK_b})--(\ref{eq_oc_RK_bac}) the indices run over all possible values where the corresponding coefficient is not zero.

\section{Special cases for the (4,3) low-storage Runge-Kutta methods}
\label{sec_app_spec43}
As an illustration, some special cases when Eq.~(\ref{eq_a_bc_rec}) may not be applicable to all $a_{ij}$ are presented. It is convenient to explicitly spell out the conditions (\ref{eq_oc_1a})--(\ref{eq_oc_2}) for the $(4,3)$ method. Form (I):
\begin{eqnarray}
a_{32}(b_1-a_{21}) &=& (a_{31}-a_{21})b_2,\label{eq_oc_43_f1_1}\\
a_{42}(a_{31}-a_{21}) &=&(a_{41}-a_{21})a_{32},\label{eq_oc_43_f1_2}\\
a_{43}(b_2-a_{32}) &=& (a_{42}-a_{32})b_3.\label{eq_oc_43_f1_3}
\end{eqnarray}
Form (II):
\begin{eqnarray}
a_{42}(b_1-a_{21}) &=&(a_{41}-a_{21})b_{2}.\label{eq_oc_43_f2_2}
\end{eqnarray}
The other two equations of the form (II) are the same as Eq.~(\ref{eq_oc_43_f1_1}) and (\ref{eq_oc_43_f1_3}).

\subsubsection{Case $b_2=0$, $b_3\neq0$}

From Eq.~(\ref{eq_oc_43_f1_1}) with $b_2=0$ it follows $b_1=a_{21}=c_2$ as $a_{32}$ cannot be zero. $c_2$ and $c_3$ are chosen as free parameters. Eqs.~(\ref{eq_oc_RK_b})--(\ref{eq_oc_RK_bc2}) are solved for $b_3$, $b_4$ and $c_4$:
\begin{eqnarray}
c_4 &=& \frac{1/3-c_3/2}{1/2-(1-c_2)c_3},\,\,\,\,\,1/2-(1-c_2)c_3\neq0,\label{eq_43_b20_c4}\\
b_4 &=& \frac{1/3-c_3/2}{c_4(c_4-c_3)},\,\,\,\,\,c_3\neq c_4,\label{eq_43_b20_b4}\\
b_3 &=& 1 - c_2 - b_4.\label{eq_43_b20_b3}
\end{eqnarray}
From Eq.~(\ref{eq_oc_43_f1_2})
\begin{equation}
a_{42}=\frac{c_4-c_2-a_{43}}{c_3-c_2}\,a_{32},\,\,\,\,\,c_2\neq c_3.\label{eq_43_b20_a42}
\end{equation}
From Eq.~(\ref{eq_oc_43_f1_3}) (since $b_3\neq0$ the same result is achieved from using the general case (\ref{eq_a_bc}))
\begin{equation}
a_{43}=\frac{b_3}{b_3-c_3+c_2}(c_4-c_3).\label{eq_43_b20_a43}
\end{equation}
Then from using Eq.~(\ref{eq_oc_RK_bac})
\begin{equation}
a_{32}=\frac{c_3-c_2}{c_2}\,\frac{1/6-b_4a_{43}c_3}{1/2-c_2(1-c_2)-b_4a_{43}}.\label{eq_43_b20_a32}
\end{equation}
To populate the Butcher tableau one chooses particular values for $c_2$ and $c_3$ and then proceeds in the order (\ref{eq_43_b20_c4}), (\ref{eq_43_b20_b4}), (\ref{eq_43_b20_b3}), (\ref{eq_43_b20_a43}), (\ref{eq_43_b20_a32}) and (\ref{eq_43_b20_a42}).

\subsubsection{Case $b_3=0$, $b_2\neq0$}

From Eq.~(\ref{eq_oc_43_f1_3}) $a_{32}=b_2$ as $a_{43}\neq0$. From Eq.~(\ref{eq_oc_43_f1_1}) $a_{31}=b_1$ which means $b_1+b_2=c_3$, then
\begin{equation}
b_1=c_3-b_2.\label{eq_43_b30_b1}
\end{equation}
 From Eq.~(\ref{eq_oc_RK_b})
\begin{equation}
c_3=1-b_4.\label{eq_43_b30_c3}
\end{equation}
$c_2$ and $c_4$ are chosen as free parameters and Eqs.~(\ref{eq_oc_RK_bc}), (\ref{eq_oc_RK_bc2}) are solved for $b_2$ and $b_4$:
\begin{eqnarray}
b_2&=&\frac{c_4/2-1/3}{c_2(c_4-c_2)},\,\,\,\,\,c_2\neq c_4,\label{eq_43_b30_b2}\\
b_4&=&\frac{1/3-c_2/2}{c_4(c_4-c_2)}.\label{eq_43_b30_b4}
\end{eqnarray}
From Eq.~(\ref{eq_oc_43_f1_2}) (since $b_2\neq0$ the same result is achieved from using the general case (\ref{eq_a_bc_rec}))
\begin{equation}
a_{42}=\frac{b_2}{c_3-c_2}(c_4-c_2-a_{43}),\,\,\,\,\,c_2\neq c_3.\label{eq_43_b30_a42}
\end{equation}
Then from Eq.~(\ref{eq_oc_RK_bac})
\begin{equation}
a_{43}=\frac{(c_3-c_2)/(6b_4)-b_2c_2(c_4-c_2)}{(c_3-c_2)c_3-b_2c_2}.\label{eq_43_b30_a43}
\end{equation}
After choosing $c_2$ and $c_4$ proceed in the order (\ref{eq_43_b30_b2}), (\ref{eq_43_b30_b4}), (\ref{eq_43_b30_c3}), (\ref{eq_43_b30_b1}), (\ref{eq_43_b30_a43}) and (\ref{eq_43_b30_a42}).

\subsubsection{Case $c_2=c_3$}

$c_2\neq c_4$ are taken as free parameters. From Eq.~(\ref{eq_oc_43_f1_1})
\begin{equation}
b_1=c_2-b_2\label{eq_43_c2c3_b1}
\end{equation}
and thus from Eq.~(\ref{eq_oc_RK_b})
\begin{equation}
b_3=1-c_2-b_4.\label{eq_43_c2c3_b3}
\end{equation}
From Eq.~(\ref{eq_oc_43_f1_2}) (consistent also with the general case (\ref{eq_a_bc_rec}))
\begin{equation}
a_{43}=c_4-c_2.\label{eq_43_c2c3_a43}
\end{equation}
Equations~(\ref{eq_oc_RK_bc}) and (\ref{eq_oc_RK_bc2}) are solved for $b_2$ and $b_4$:
\begin{eqnarray}
b_2 &=& \frac{c_4/2-1/3}{c_2(c_4-c_2)}-b_3,\label{eq_43_c2c3_b2}\\
b_4 &=& \frac{1/3-c_2/2}{c_4(c_4-c_2)}.\label{eq_43_c2c3_b4}
\end{eqnarray}
The system of equations (\ref{eq_oc_43_f1_3}) and (\ref{eq_oc_RK_bac}) is simultaneously solved for $a_{32}$ and $a_{42}$:
\begin{eqnarray}
a_{32} &=& \frac{b_4(b_2+b_3)a_{43}-b_3/(6c_2)}{b_4a_{43}-b_3(1-c_2)},\label{eq_43_c2c3_a32}\\
a_{42} &=& \frac{1}{b_3}[b_2a_{43}-(a_{43}-b_3)a_{32}].\label{eq_43_c2c3_a42}
\end{eqnarray}
After choosing $c_2$ and $c_4$ proceed in the order (\ref{eq_43_c2c3_b4}), (\ref{eq_43_c2c3_b3}), (\ref{eq_43_c2c3_b2}), (\ref{eq_43_c2c3_b1}), (\ref{eq_43_c2c3_a43}), (\ref{eq_43_c2c3_a32}) and (\ref{eq_43_c2c3_a42}).

\subsubsection{Case $c_3=c_4$}

$c_2\neq c_3$ are taken as free parameters.
$a_{32}$ and $a_{42}$ are the same as in the general case (\ref{eq_a_bc_rec}):
\begin{eqnarray}
a_{32}&=&\frac{b_2}{b_1+b_2-c_2}(c_3-c_2),\label{eq_43_c3c4_a32}\\
a_{42}&=&\frac{b_2}{b_1+b_2-c_2}(c_3-c_2-a_{43}).\label{eq_43_c3c4_a42}
\end{eqnarray}
Eq.~(\ref{eq_43_c3c4_a32}) is subtracted from (\ref{eq_43_c3c4_a42}) and substituted in Eq.~(\ref{eq_oc_43_f1_3}). Canceling $a_{43}\neq0$ gives $b_1+b_2+b_3=c_3$ from which (with Eq.~(\ref{eq_oc_RK_b}))
\begin{eqnarray}
b_4&=&1-c_3,\label{eq_43_c3c4_b4}\\
b_1&=&c_3-b_2-b_3.\label{eq_43_c3c4_b1}
\end{eqnarray}
Solving Eqs.~(\ref{eq_oc_RK_bc}), (\ref{eq_oc_RK_bc2}) for $b_2$ and $b_3$:
\begin{eqnarray}
b_2&=&\frac{c_3/2-1/3}{c_2(c_3-c_2)},\label{eq_43_c3c4_b2}\\
b_3&=&\frac{1/3-c_2/2}{c_3(c_3-c_2)}-b_4.\label{eq_43_c3c4_b3}
\end{eqnarray}
Solving Eq.~(\ref{eq_oc_RK_bac}) for $a_{43}$:
\begin{equation}
a_{43}=\frac{(1/6-b_3a_{32}c_2)(b_1+b_2-c_2)-b_4b_2c_2(c_3-c_2)}{b_4[(b_1+b_2-c_2)c_3-b_2c_2]}.\label{eq_43_c3c4_a43}
\end{equation}
After choosing $c_2$ and $c_3$ proceed in the order (\ref{eq_43_c3c4_b2}), (\ref{eq_43_c3c4_b4}), (\ref{eq_43_c3c4_b3}), (\ref{eq_43_c3c4_b1}), (\ref{eq_43_c3c4_a32}), (\ref{eq_43_c3c4_a43}) and (\ref{eq_43_c3c4_a42}).

\section{Improved precision for the coefficients of the (6,4) scheme of Ref.~\cite{BERLAND20061459}}
\label{sec_app_bbb}
The coefficients below were computed by solving the 2N-storage order conditions and two additional constraints used in Ref.~\cite{BERLAND20061459} with a standard Newton-Raphson method with backtracking line search~\cite{NumRecBook}, coded with the arbitrary precision arithmetic library MPFR~\cite{MPFR} and using the coefficients of Ref.~\cite{BERLAND20061459} as the initial guess. The value of $B_6$ is fixed to $0.27$, as was done in Ref.~\cite{BERLAND20061459}. The Newton-Raphson method converges to the accuracy of satisfying the order conditions at $10^{-300}$ in just four iterations.

\begin{eqnarray}
c_2 &=& \phantom{-}3.291860514560574016139360757085052620500596e-02,\nonumber\\
c_3 &=& \phantom{-}2.493517233431018504774294242339061755717526e-01,\nonumber\\
c_4 &=& \phantom{-}4.669117050548576634478026408182787823664122e-01,\nonumber\\
c_5 &=& \phantom{-}5.820304140439261598301282787623770385763741e-01,\nonumber\\
c_6 &=& \phantom{-}8.472529837826966533345857631306276101828820e-01,\nonumber\\
A_1 &=& \phantom{-}0,\nonumber\\
A_2 &=& -7.371013927959100015085736294563710861301655e-01,\nonumber\\
A_3 &=& -1.634740794340906961222612899974121227203739e+00,\nonumber\\
A_4 &=& -7.447390037800703313971792823734483498376512e-01,\nonumber\\
A_5 &=& -1.469897351521944371244484234187043583134644e+00,\nonumber\\
A_6 &=& -2.813971388035238894872690695659944758090490e+00,\nonumber\\
B_1 &=& \phantom{-}3.291860514560574016139360757085052620500596e-02,\nonumber\\
B_2 &=& \phantom{-}8.232569981988439778822317874254015260794315e-01,\nonumber\\
B_3 &=& \phantom{-}3.815309489002858170631520216481864120871775e-01,\nonumber\\
B_4 &=& \phantom{-}2.000922131840258454393248810001898523823106e-01,\nonumber\\
B_5 &=& \phantom{-}1.718581042714403494253985915871400632540402e+00,\nonumber\\
B_6 &=& \phantom{-}2.700000000000000000000000000000000000000000e-01,\nonumber
\end{eqnarray}

\section{Matlab script for checking the special cases $b_i=0$ in converting between the standard and 2N-storage coefficients}
\label{sec_app_matlab}
The following Matlab script illustrates the conversion and use of proper and improper formulas, discussed in Sec.~\ref{sec_check_conv}.
\verbatiminput{check_williamson.m}

\bibliography{lie_int}

\end{document}